\newtheorem{theorem}{Theorem}[section]
\newtheorem{lemma}[theorem]{Lemma}
\newtheorem{corollary}[theorem]{Corollary}
\newtheorem{remark}[theorem]{Remark}
\newtheorem{example}[theorem]{Example}
\newtheorem{definition}[theorem]{Definition}
\def\GG{{\mathcal{G}}}
\def\ZZ{{\mathbb{Z}}}
\def\CP{{\mathcal P}}
\def\CL{{\mathcal L}}
\def\CD{{\mathcal D}}
\newcommand \Jac {\mathop{\rm Jac}}
\begin{document}

\begin{center}
\uppercase{\bf Critical Groups of Graphs with Dihedral Actions}

\vskip .25in

{\bf Darren B Glass}\\
{\small Department of Mathematics, Gettysburg College, Gettysburg PA 17325}\\
{\tt dglass@gettysburg.edu}\\

\vskip .1in

{\bf Criel Merino}\\
{\small Instituto de Matematicas, Universidad Nacional Autonoma de Mexico\\ C.U. Coyoacan 04510, Mexico,D.F. Mexico.}\\
{\tt merino@matem.unam.mex}\\

\end{center}

\begin{abstract}

In this paper we consider the critical group of finite connected graphs which admit harmonic actions by the dihedral group $D_n$.  In particular, we show that if the orbits of the $D_n$-action all have either $n$ or $2n$ points then the critical group of such a graph can be decomposed in terms of the critical groups of the quotients of the graph by certain subgroups of the automorphism group.  This is analogous to a theorem of Kani and Rosen which decomposes the Jacobians of algebraic curves with a $D_n$-action.
\end{abstract}

\section{Introduction}

In recent years, there have been a number of papers exploring the relationships between Jacobians of graphs (also known as critical groups) and Jacobians of algebraic curves.  This connection was originally written about by Baker and Norine in \cite{BN1} and has subsequently been explored in papers such as \cite{BN2}, \cite{BS}, \cite{Corry1}, \cite{Corry2}, and \cite{Primer}. While there is not a literal dictionary between the two situations, several authors have shown that variations of theorems in algebraic geometry such as the Riemann-Hurwitz formula, the Hurwitz bound on the size of automorphism groups, and the Riemann-Roch theorem work in the setting of graph theory.  At the same time, it is well known that the order of the critical group of a graph is the number of spanning trees on the graph, and recent papers such as \cite{CYZ}, \cite{ZY2},\cite{ZY} have given some results on the number of spanning trees of graphs which admit certain automorphisms.

Kani and Rosen proved the following result, which is actually a corollary to \cite[Theorem B]{KR}, describing the relationship between the Jacobian of a curve which admits certain automorphisms and the Jacobians of the quotients by these automorphisms.

\begin{theorem}\label{T:KR}
Let $X$ be an algebraic curve so that the dihedral group $D_n$ generated by two involutions $\sigma_1$ and $\sigma_2$ acts on $X$.  Then we have the following isogeny relationship:
\[\Jac(X) \oplus (\Jac(X/D_n))^2 \sim \Jac(X/\sigma_1) \oplus \Jac(X/\sigma_2) \oplus \Jac(X/\sigma_1\sigma_2)\]
\end{theorem}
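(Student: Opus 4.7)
The plan is to reduce the statement to a purely group-theoretic identity of permutation characters on $D_n$ and then invoke the general isogeny criterion of Kani and Rosen (their \cite[Theorem B]{KR}). That criterion says: if a finite group $G$ acts on a curve $X$ and $H_1, \ldots, H_r, H'_1, \ldots, H'_s$ are subgroups of $G$ for which
\[\sum_{i=1}^r \mathrm{Ind}_{H_i}^{G} 1_{H_i} \;=\; \sum_{j=1}^{s} \mathrm{Ind}_{H'_j}^{G} 1_{H'_j}\]
as $G$-characters, then $\bigoplus_i \Jac(X/H_i) \sim \bigoplus_j \Jac(X/H'_j)$. All of the content of Theorem~\ref{T:KR} will therefore sit in producing such an identity.

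To set it up, I would take $G = D_n$, use $H_1 = \{e\}$ (so $X/H_1 = X$) together with $H_2 = H_3 = D_n$ on the left, and $H'_1 = \langle\sigma_1\rangle$, $H'_2 = \langle\sigma_2\rangle$, $H'_3 = \langle\sigma_1\sigma_2\rangle$ (the cyclic rotation subgroup, which has order exactly $n$ because $\sigma_1, \sigma_2$ generate $D_n$) on the right. What needs to be verified is
\[\mathrm{reg}_{D_n} + 2\cdot 1_{D_n} \;=\; \mathrm{Ind}_{\langle\sigma_1\rangle}^{D_n} 1 + \mathrm{Ind}_{\langle\sigma_2\rangle}^{D_n} 1 + \mathrm{Ind}_{\langle\sigma_1\sigma_2\rangle}^{D_n} 1.\]

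I would check this equality on the conjugacy classes of $D_n$ using the standard formula
\[\mathrm{Ind}_{H}^{G}1(g) \;=\; \frac{|C_G(g)|}{|H|}\,\bigl|\{h\in H : h\sim_G g\}\bigr|.\]
At the identity both sides equal $2n+2$. At a nontrivial rotation $r^k$ the two reflection subgroups $\langle\sigma_i\rangle$ contribute $0$, while $\mathrm{Ind}_{\langle\sigma_1\sigma_2\rangle}^{D_n} 1$ contributes exactly $2$ (one checks both the generic case and $k=n/2$ separately), matching the $2$ from $2\cdot 1_{D_n}$ on the left. The reflection case is the only real work: for $n$ odd all reflections form a single conjugacy class and each $\mathrm{Ind}_{\langle\sigma_i\rangle}^{D_n}1$ takes value $1$ on any reflection, giving $1+1+0=2$. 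For $n$ even the reflections split into two classes, and one needs the key observation that $\sigma_1\sigma_2$ having order exactly $n$ forces $\sigma_1$ and $\sigma_2$ to lie in different reflection classes (otherwise the exponents line up to make $\sigma_1\sigma_2$ a proper power). So for each reflection exactly one of the two induced characters evaluates to $2$ and the other to $0$, again summing to $2$.

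The main obstacle is the $n$ even case: mishandling the two reflection classes, or failing to notice that $\sigma_1$ and $\sigma_2$ are forced into different ones, would break the identity at the reflections. Once that combinatorial point is in place, the rest is pure bookkeeping against the character table, and Theorem~\ref{T:KR} drops out of Theorem B without any further input from the geometry of $X$.
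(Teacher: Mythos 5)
Your proposal is correct and matches the paper's approach: the paper offers no proof of Theorem \ref{T:KR}, stating only that it is a corollary of \cite[Theorem B]{KR}, and your argument is precisely that derivation, with the hypothesis of Theorem B verified via the permutation-character identity $\mathrm{reg}_{D_n} + 2\cdot 1_{D_n} = \mathrm{Ind}_{\langle\sigma_1\rangle}^{D_n} 1 + \mathrm{Ind}_{\langle\sigma_2\rangle}^{D_n} 1 + \mathrm{Ind}_{\langle\sigma_1\sigma_2\rangle}^{D_n} 1$. Your class-by-class check is accurate, including the essential point that for $n$ even the two generating reflections lie in distinct conjugacy classes.
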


In this paper, we explore whether an analogous theorem will hold for the Jacobians of graphs. In particular, given a graph $\GG$ with a dihedral action generated by two involutions $\sigma_1$ and $\sigma_2$, we investigate when it is the case that $\Jac(X/\sigma_1) \oplus \Jac(X/\sigma_2) \oplus \Jac(X/\sigma_1\sigma_2)$ is a subgroup of $\Jac(X) \oplus (\Jac(X/D_n))^2$. The following example shows that, in general, this will not be the case.

\begin{example}
Consider the following graph $\GG$:
\begin{center}
\begin{tikzpicture}
[shorten >=1pt,node distance=.8cm,on grid,auto,/tikz/initial text=]

\node[state](v1) at (0,0) {$v_1$};
\node[state] (x2) at (3,0) {$x_2$};
\node[state] (x1) at (3,3) {$x_1$};
\node [state](x3) at (3,-3) {$x_3$};
\node [state](v2) at (6,0) {$v_2$};
\path
(v1) edge [bend left=15] node {$a_1$} (x1)
(x1) edge [bend left=15] node {$d_1$} (v2)
(v1) edge [bend right=15] node {$a_2$} (x1)
(x1) edge [bend right=15] node {$d_2$} (v2)
(v1) edge [bend left=15] node {$b_1$} (x2)
(x2) edge [bend left=15] node {$e_1$} (v2)
(v1) edge [bend right=15] node {$b_2$} (x2)
(x2) edge [bend right=15] node {$e_2$} (v2)
(v1) edge [bend left=15] node {$c_1$} (x3)
(x3) edge [bend left=15] node {$f_1$} (v2)
(v1) edge [bend right=15] node {$c_2$} (x3)
(x3) edge [bend right=15] node {$f_2$} (v2);
\end{tikzpicture}
\end{center}

Define $\sigma_1$ to be the involution on this graph which permutes the edges by the map $(a_1 a_2)(b_1 c_2)(b_2 c_1)(d_1 d_2)(e_1 f_2)(e_2 f_1)$ and define $\sigma_2$ to be the involution which permutes the edges by the map $(a_1 b_2)(a_2 b_1)(c_1 c_2)(d_1 e_2)(d_2 e_1)(f_1 f_2)$.  One can check that $\sigma_1\sigma_2$ has order three and therefore that the group generated by $\sigma_1$ and $\sigma_2$ is isomorphic to $D_3$.  Moreover, the action of $D_3$ on $G$ is harmonic and the graph $G/D_3$ is a tree.

One can compute the quotient graphs and, by a straightforward calculation using the Smith Normal Form, compute that

\[K(G/\sigma_1) \cong K(G/\sigma_2) \cong \ZZ/12\ZZ\]
\[K(G/\sigma_1\sigma_2) \cong (\ZZ/2\ZZ)^2\]
\[K(G) \cong (\ZZ/2\ZZ)^2 \oplus (\ZZ/4\ZZ) \oplus (\ZZ/12\ZZ)\]

\noindent and in particular we conclude that $K(G/\sigma_1) \oplus K(G/\sigma_2) \oplus K(G/\sigma_1\sigma_2)$ is not a subgroup of $K(G)$.

\end{example}

We suspect that the problem with this example is that every element of the group $D_3$ fixes the two vertices at either end, and in particular the inertia groups at these vertices is all of $D_3$.  This is a situation that cannot occur for algebraic curves, where inertia groups must be cyclic except in the presence of wild ramification.  In this note, we will show that a theorem along the lines of Theorem \ref{T:KR} holds in the case where all of the inertia groups are either trivial or isomorphic to $\ZZ/2\ZZ$.

Theorems \ref{T:kernel} and \ref{T:quotient} will allow us to write down a decomposition of the group $K(G)$ in terms of the critical groups of the quotient graphs.  In general, it appears that these decompositions do not split, but under slightly stronger hypothesis, we can prove the following direct analogue to Theorem \ref{T:KR}.

\begin{theorem}\label{T:tree}
Let $n$ be odd and let $G$ be a graph such that the group $D_n$ generated by the two involutions $\sigma_1$ and $\sigma_2$ acts harmonically on $G$ and every $D_n$-orbit of $G$ has either $n$ or $2n$ points.  Furthermore, assume that the quotient graph $G/D_n$ is a tree.  Then $K(G/\sigma_1) \oplus K(G/\sigma_2) \oplus K(G/\sigma_1\sigma_2)$ is a subgroup of $K(G)$ and the quotient is isomorphic to $\ZZ/n\ZZ$.
\end{theorem}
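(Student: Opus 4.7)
My plan is to deduce Theorem \ref{T:tree} as a specialization of Theorems \ref{T:kernel} and \ref{T:quotient}, supplemented by an order count that pins down the index as $n$. The orbit-size hypothesis, that every $D_n$-orbit has $n$ or $2n$ points, means that every stabilizer in $D_n$ has order $1$ or $2$; in particular all inertia groups are cyclic, so the hypotheses required for Theorems \ref{T:kernel} and \ref{T:quotient} to apply are in force.

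First I would assemble the pullback maps $\pi_i^*:K(G/H_i)\to K(G)$ associated to the three intermediate harmonic quotients, where $H_1=\langle\sigma_1\rangle$, $H_2=\langle\sigma_2\rangle$, and $H_3=\langle\sigma_1\sigma_2\rangle$, and combine them into
$$\Phi:K(G/\sigma_1)\oplus K(G/\sigma_2)\oplus K(G/\sigma_1\sigma_2)\longrightarrow K(G).$$
Theorem \ref{T:kernel} should provide an injection of the left-hand side into a group built from $K(G)$ and $K(G/D_n)$. Because $G/D_n$ is a tree we have $K(G/D_n)=0$, so that group collapses to $K(G)$ itself and $\Phi$ is the injection we want. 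This takes care of the embedding half of the statement.

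The remaining task is to compute the index of the image, for which I would feed the hypothesis $K(G/D_n)=0$ into Theorem \ref{T:quotient}. Orders can be tracked via the matrix-tree theorem, which expresses each critical group's order as a count of spanning trees. In the algebraic-curve analogy of Theorem \ref{T:KR}, the isogeny identity reflects the character relation
$$\dim V+2\dim V^{D_n}=\dim V^{\sigma_1}+\dim V^{\sigma_2}+\dim V^{\sigma_1\sigma_2}$$
for each irreducible representation $V$ of $D_n$; the failure of the isogeny to be an isomorphism is controlled by the obstruction to descending the corresponding rational idempotents from $\Q[D_n]$ to $\ZZ[D_n]$. For $n$ odd, this obstruction is governed by the $(n-1)/2$ two-dimensional irreducible representations of $D_n$, which are defined over $\ZZ[\zeta_n+\zeta_n^{-1}]$ but not over $\ZZ$, and should contribute exactly the factor of $n$ in the cokernel.

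The hardest step, I expect, is showing that the cokernel is the \emph{cyclic} group $\ZZ/n\ZZ$ rather than merely some finite abelian group of order $n$. Establishing the order is a fairly mechanical calculation once Theorem \ref{T:quotient} is in hand; establishing cyclicity requires more care. The most direct route would be to exhibit, for some $D_n$-orbit $O$ of size $n$ (which must exist because $G/D_n$ is a tree and the $D_n$-action cannot be free everywhere without increasing the first Betti number of $G/D_n$), an explicit divisor supported on $O$ whose class in the cokernel has order exactly $n$, and then combine this with the order estimate. The oddness of $n$ should be essential here, both because it forces $\langle\sigma_1\sigma_2\rangle$ to have odd order (so it intersects every reflection subgroup trivially) and because it keeps the representation theory of $D_n$ from acquiring extra one-dimensional factors that would enlarge the cokernel.
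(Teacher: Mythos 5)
Your proposal matches the paper's proof: Theorem \ref{T:tree} is deduced exactly by combining Theorem \ref{T:kernel} (whose kernel $\Jac(\hat{G})^2$ vanishes when $G/D_n$ is a tree, giving the embedding of the direct sum into $\Jac(G)$) with Theorem \ref{T:quotient} (which for $n$ odd asserts the quotient is $\ZZ/n\ZZ$). Your closing paragraphs on idempotent obstructions and on separately establishing cyclicity are unnecessary detours, since Theorem \ref{T:quotient} already delivers the cyclic group $\ZZ/n\ZZ$ outright and needs no tree hypothesis fed into it.
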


More generally, one corollary of our results on the size of the critical groups will be the following:

\begin{corollary}
Let $G$ be a graph such that the group $D_n$ generated by the two involutions $\sigma_1$ and $\sigma_2$ acts harmonically on $G$ and every $D_n$-orbit of $G$ has either $n$ or $2n$ points.  Then the order of the critical group $K(G)$ is equal to $n\cdot |K(G/\sigma_1)| \cdot |K(G/\sigma_2)| \cdot |K(G/\sigma_1\sigma_2)|$.
\end{corollary}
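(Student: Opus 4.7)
My plan is to deduce the corollary by taking orders in the decomposition of $K(G)$ supplied by Theorems \ref{T:kernel} and \ref{T:quotient}. As the paper indicates, those two theorems together express $K(G)$ as an extension whose kernel and cokernel are built from the critical groups of the quotient graphs $G/\sigma_1$, $G/\sigma_2$, $G/\sigma_1\sigma_2$, and (possibly) $G/D_n$. Reading this as a short exact sequence $0 \to A \to K(G) \to B \to 0$, multiplicativity of orders gives $|K(G)| = |A| \cdot |B|$, so the task reduces to algebraic simplification.

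Concretely, I would read off $|A|$ and $|B|$ from the theorem statements and expand the product. Two possibilities arise: either the statements of Theorems \ref{T:kernel} and \ref{T:quotient} directly give $|A| = |K(G/\sigma_1)| \cdot |K(G/\sigma_2)| \cdot |K(G/\sigma_1\sigma_2)|$ and $|B| = n$ (as they do under the stronger hypotheses of Theorem \ref{T:tree}), in which case the corollary is immediate; or the orders involve additional factors of $|K(G/D_n)|$ that must cancel to obtain the clean formula. In the latter case I would exploit the three-fold symmetry of the intermediate quotients $G/\sigma_1$, $G/\sigma_2$, $G/\sigma_1\sigma_2$, each of which covers $G/D_n$ via a harmonic morphism, to verify that the $|K(G/D_n)|$ contributions in the numerator and denominator balance out.

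The main obstacle I anticipate is precisely this cancellation of $|K(G/D_n)|$ factors: since the final formula is free of any such factor, while the general decomposition (in the absence of the tree hypothesis used in Theorem \ref{T:tree}) would naturally be expected to involve one, the argument hinges on showing that these factors always appear with matching multiplicities in $|A|$ and $|B|$. To carry this out I would partition the $D_n$-orbits on $V(G)$ and $E(G)$ according to whether they have size $n$ (inertia of order $2$) or size $2n$ (trivial inertia), and compute the contribution of each orbit type to the orders produced by Theorems \ref{T:kernel} and \ref{T:quotient}. The orbit-size hypothesis is exactly what makes this analysis tractable: it restricts the inertia groups to the two cases above, in each of which the harmonic push-pull identities are rigid enough that the cancellation can be verified directly, leaving the single factor of $n$ that appears in the statement.
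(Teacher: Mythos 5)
You have correctly identified the derivation the paper intends: the corollary is supposed to follow by multiplying orders along the two exact sequences furnished by Theorems \ref{T:kernel} and \ref{T:quotient}. The gap is precisely at the step you flag as the main obstacle, and it cannot be repaired by the orbit-by-orbit bookkeeping you propose, because the cancellation you are hoping for does not occur. Writing $\hat{G}=G/D_n$ and $H_1,H_2,H_3$ for the quotients by $\sigma_1$, $\sigma_2$, $\sigma_1\sigma_2$, Theorem \ref{T:kernel} gives
\[
|K(H_1)+K(H_2)+K(H_3)| \;=\; \frac{|K(H_1)|\,|K(H_2)|\,|K(H_3)|}{|K(\hat{G})|^{2}\cdot 2^{\,s-1}},
\]
while Theorem \ref{T:quotient} gives $|K(G)| = n\cdot 2^{\,s-1}\cdot |K(H_1)+K(H_2)+K(H_3)|$ (the factors $2^{\,s-1}$ appearing only when $n$ is even and $s\ge 1$). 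The only cancellation that actually happens is of the $2^{\,s-1}$ terms; the $|K(\hat{G})|^{2}$ in the denominator, coming from the kernel in Theorem \ref{T:kernel}, has no counterpart in Theorem \ref{T:quotient} to cancel against. What the two theorems really yield is
\[
|K(G)|\cdot |K(G/D_n)|^{2} \;=\; n\cdot |K(G/\sigma_1)|\cdot |K(G/\sigma_2)|\cdot |K(G/\sigma_1\sigma_2)|,
\]
the exact multiplicative shadow of the Kani--Rosen isogeny in Theorem \ref{T:KR}.

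Consequently your argument only produces the formula as stated when $K(G/D_n)$ is trivial, which happens in every example in the paper (there $G/D_n$ is always a tree) but is not forced by the hypotheses. A concrete test case is $K_{5,5}$ minus a perfect matching, with parts $\{u_i\}$ and $\{w_i\}$ and the natural $D_5$-action: both vertex orbits have size $n=5$, the action is harmonic since the deleted matching removes exactly the edges $u_iw_i$, and the two edge orbits $\{u_iw_{i\pm1}\}$ and $\{u_iw_{i\pm2}\}$ make $\hat{G}$ a pair of vertices joined by two parallel edges, so $|K(\hat{G})|=2$. Here one computes $|K(G)|=40500$, $|K(G/\sigma_1)|=|K(G/\sigma_2)|=90$, $|K(G/\sigma_1\sigma_2)|=4$, and indeed $40500\cdot 2^{2}=5\cdot 90\cdot 90\cdot 4$, whereas the right-hand side alone is $162000\ne 40500$. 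So the missing $|K(G/D_n)|^{2}$ is not an artifact of loose bookkeeping: to close the argument you must either carry that factor on the left-hand side of the identity, or impose an additional hypothesis guaranteeing $K(G/D_n)$ is trivial (e.g.\ that $G/D_n$ is a tree, as in Theorem \ref{T:tree}).
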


The structure of the paper is as follows.  In Section \ref{S:notation} we define the setup of the paper and give some technical preliminaries.  In Section \ref{S:P123}, we consider the set of divisors that can be written as the sum of pullbacks of divisors on the three quotient graphs, and describe the structure of this set of divisors.  While this set is connected with the set $K(G/\sigma_1) \oplus K(G/\sigma_2) \oplus K(G/\sigma_1\sigma_2)$, they are not identical and the following two sections will consider in turn the extent to which this set is overcounting and then undercounting the set of divisors we are interested in.  A final section gives some examples and corollaries.

\section{Notation and Technical Preliminaries}\label{S:notation}

We recall that the critical group of a graph $G$ can be defined as the set of divisors on $G$ of total degree zero up to an equivalence relation.  This equivalence relation is often phrased in terms of a chip-firing game, in which any divisor can `borrow' (or `lend') a single chip from (or to) all of its neighbors simultaneously.

More precisely, given a finite connected graph $G$, we let $A$ be the adjacency matrix whose entry in row $i$ and column $j$ is the number of edges between vertex $i$ and vertex $j$ and $D$ be the diagonal matrix whose entry in row $i$ is the degree of vertex $i$.  We then define the Laplacian matrix of the graph to be $L = A-D$. It is easy to check that the entries of each column of $L$ sum to zero and in particular that $L$ has rank $n-1$.  Let $\CD$ be the set of divisors of degree zero on $G$, so that $\CD \cong \ZZ^{n-1}$ where $n$ is the number of vertices of $G$.  We define $\CL \subset \CD$  to be those divisors which are integral combinations of the columns of the Laplacian matrix $L$, and we define the critical group of $G$ to be the finite abelian group $\CD/\CL$.  The critical group is also called the Jacobian of $G$ and denoted by either $K(G)$ or $\Jac(G)$.  For details, we refer the reader to \cite{Biggs} and \cite{Primer}.

In this paper, we wish to consider graphs $\GG_n$ which admit a group action by the dihedral group $D_n$.  As is typical in the analogy between graph theory and algebraic geometry, we will require that the group action be harmonic in the sense of \cite{BN1}.  It is shown by Corry in \cite{Corry1} that the hypothesis that $D_n$ acts harmonically on $\GG_n$ is equivalent to the statement that no element of $D_n$ fixes both vertices in a given edge.  We will assume the additional hypothesis that every $D_n$-orbit of $\GG_n$ consists of either $n$ or $2n$ points.  In particular, the vertices of $\GG_n$ can be written as a disjoint union of sets $\{z_i^j\}_{i=1}^n$ for $j=1,\ldots s$ and $\{x_i^j,y_i^j\}_{i=1}^n$ for $j=1,\ldots,t$.  Without loss of generality, we may assume that $D_n$ is generated by two involutions $\sigma_1$ and $\sigma_2$ defined by

\begin{eqnarray*}
\sigma_1(z_i^j)=z_{n+1-i}^j & \sigma_1(x_i^j)=y_{n+1-i}^j & \sigma_1(y_i^j)=x_{n+1-i}^j\\
\sigma_2(z_i^j)=z_{n+2-i}^j & \sigma_2(x_i^j)=y_{n+2-i}^j & \sigma_2(y_i^j)=x_{n+2-i}^j
\end{eqnarray*}

\noindent where all addition in the subscripts takes place modulo $n$.  In particular, we note that it follows that for any vertex $v_i$ we have $\sigma_2(\sigma_1(v_i))=v_{i+1}$ and therefore that $\sigma_2\sigma_1$ is an element of order $n$.  We point out that in this notation the condition that $D_n$ acts harmonically is equivalent to the condition that our graph does not contain any edges between two vertices $z_i^{j_1}$ and $z_i^{j_2}$ in the same orbit of size $n$.

While the graphs that can be built in this way can be complicated in general, there are two families of such graphs that may be useful for the reader to keep in mind.

\begin{example}
Let $G$ be a graph with an automorphism $\phi$ so that $\phi^2=id$.  Fix two distinct vertices $a$ and $b$ so that $\phi(a)=b$.  In addition to those points, the graph $G$ will consist of $t$ pairs of vertices $(x^i,y^i)$ with $\phi(x^i)=y^i$ and $s$ vertices $z^j$ which are fixed under the involution $\phi$. Let $\GG_n$ be the graph obtained by taking $n$ copies of the graph $G$, which we will denote $G_i$, and identifying the vertex $b_i$ on $G_i$ with the vertex $a_{i+1}$ on $G_{i+1}$.
\end{example}

\begin{example} \label{Ex:circ}
Let $\{a_i\}$ be a set of $k$ integers so that $1 \le a_i < n$ for each $i$.  We define $G=C_n^{a_1,a_2,\ldots,a_k}$ to be the graph with $n$ vertices $v_1,\ldots,v_n$ which has edges connecting vertex $v_i$ to each vertex $v_{i+a_j}$, where addition works modulo $n$.  This family of graphs is called circulant graphs, and several examples are given in Figure \ref{F:circulant}.  Methods for counting the number of spanning trees (and hence the order of the critical group) of a circulant graph are discussed in \cite{AYI} and \cite{GL}.  We will see that our main theorem generalizes these results.

\begin{figure}
\begin{center}
\begin{multicols}{3}
\includegraphics[height=1in]{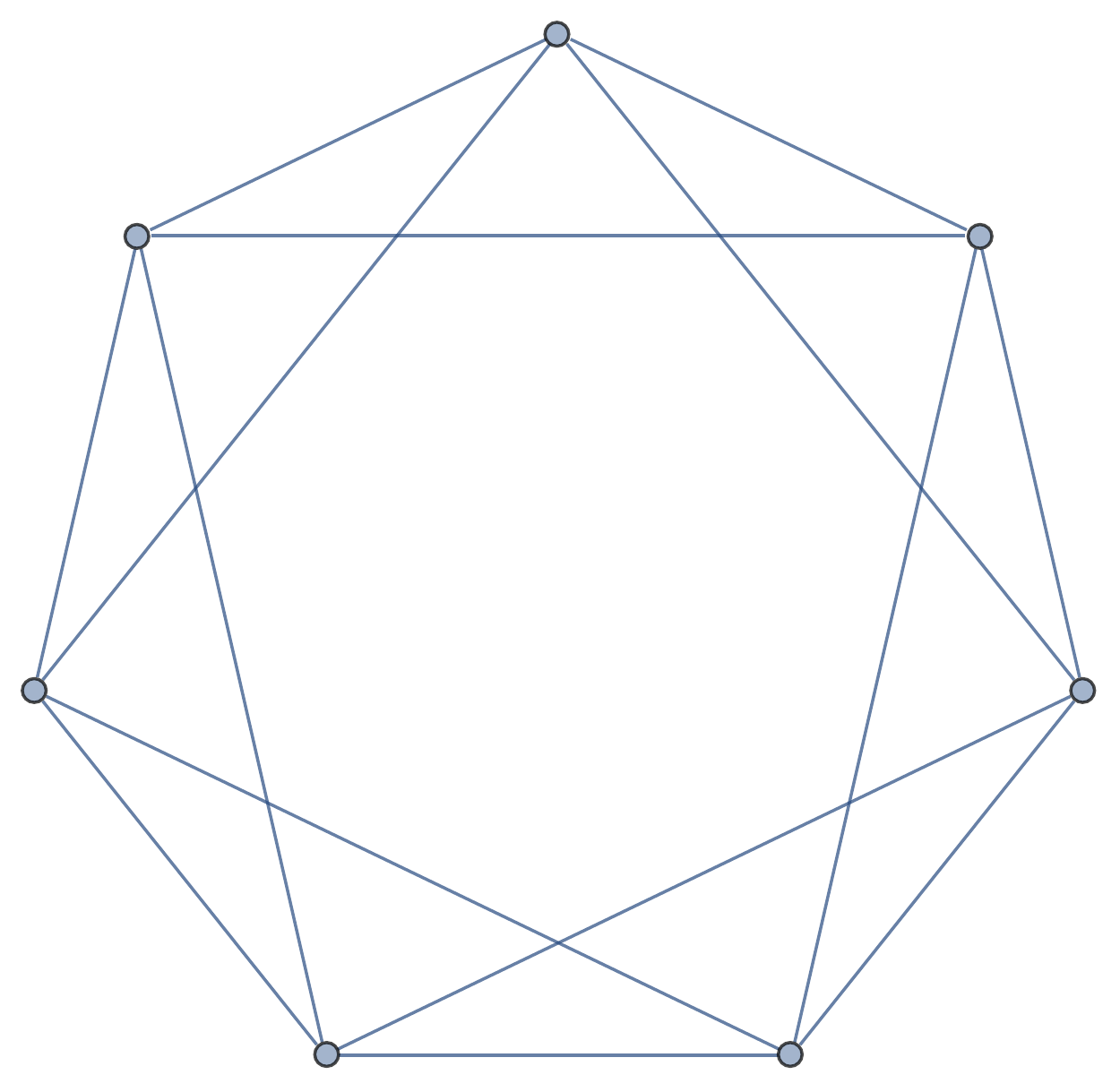}

\includegraphics[height=1in]{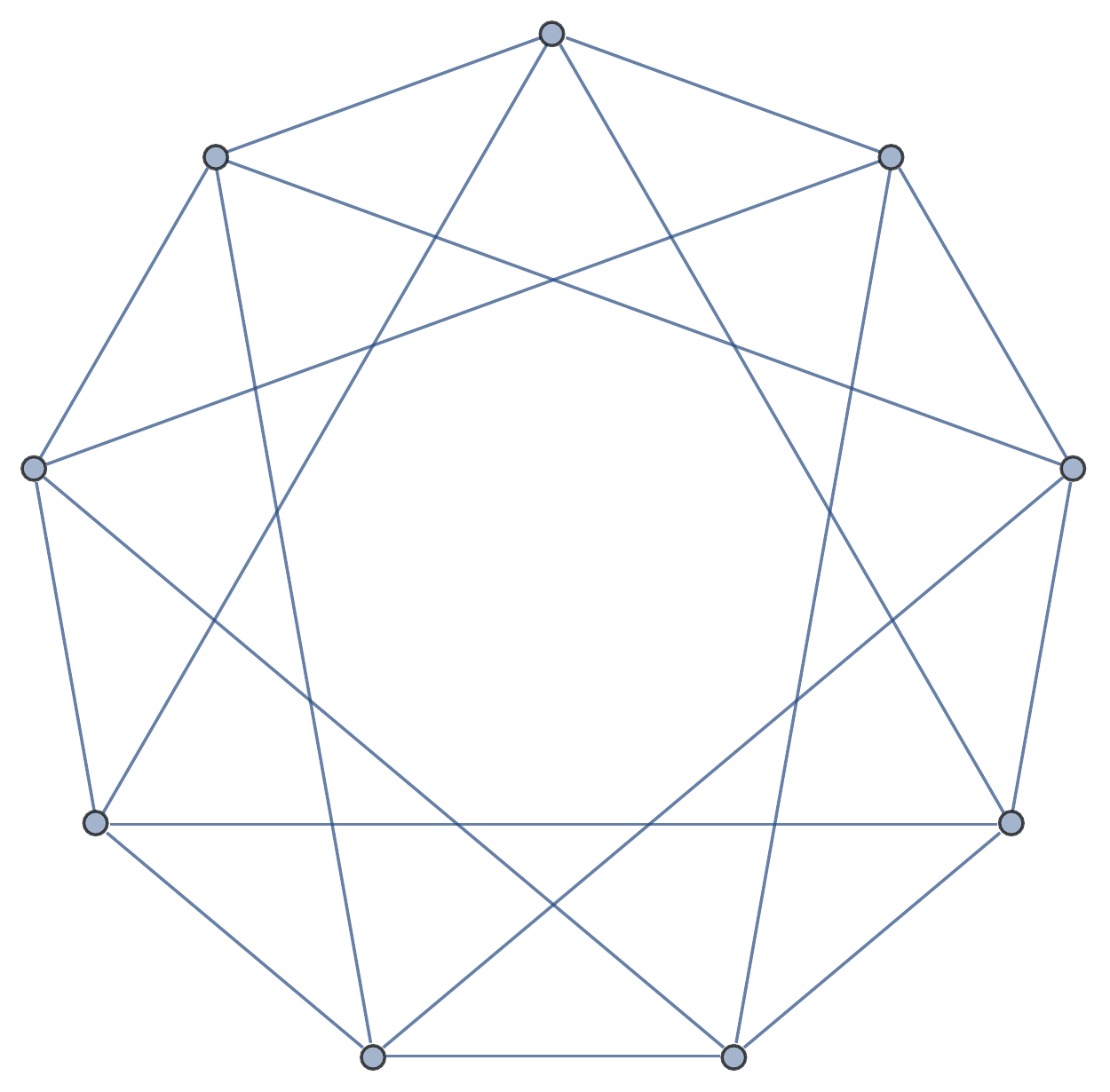}

\includegraphics[height=1in]{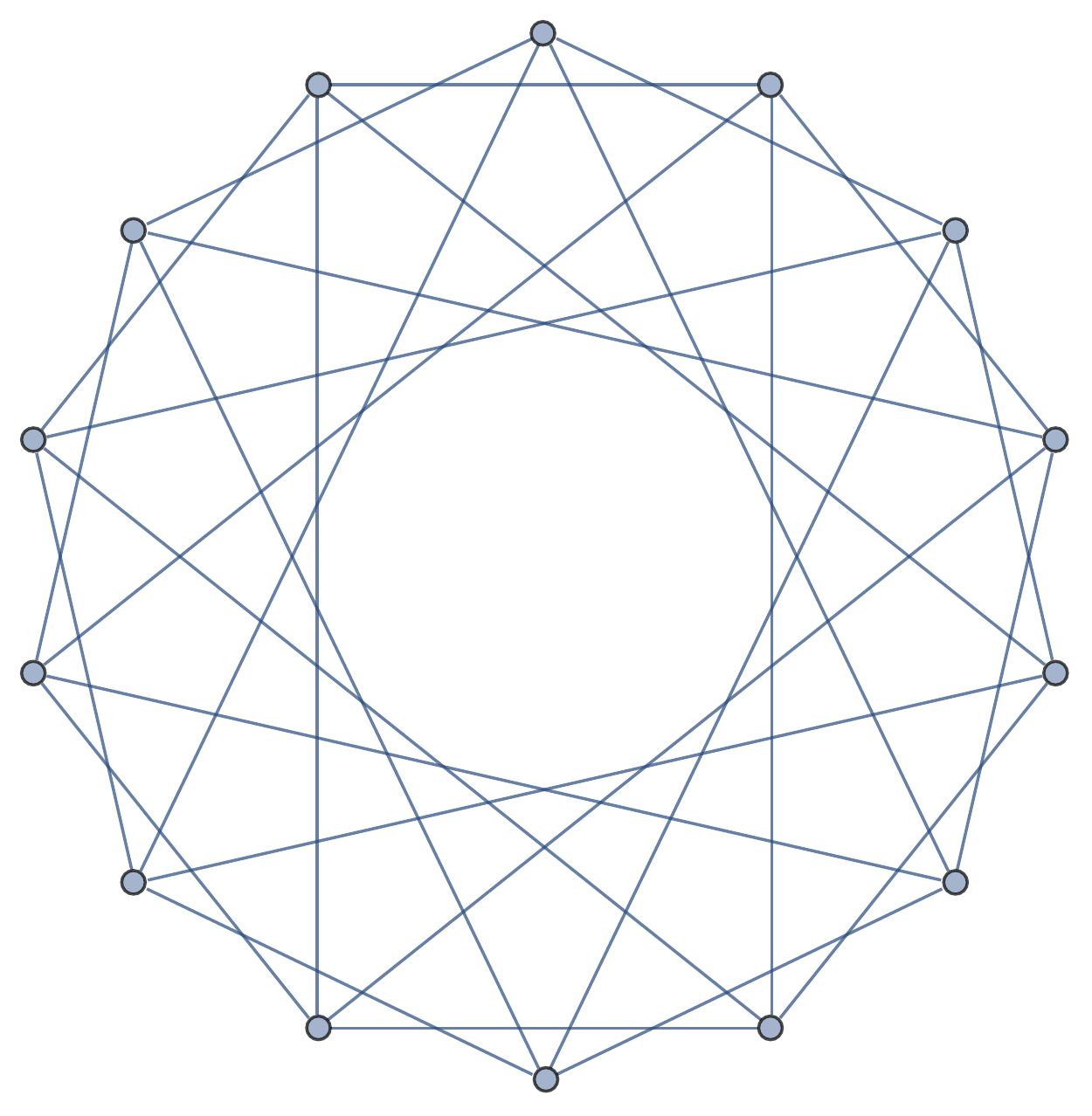}

\end{multicols}
\caption{The circulant graphs $C_7^{1,2}, C_9^{1,3}, C_{14}^{2,5}$}
\label{F:circulant}
\end{center}
\end{figure}

\end{example}

\section{Subgroups of Pullbacks} \label{S:P123}

Let $\GG_n$ be a graph with a dihedral action as described in the previous section.  Define $H_1$ to be the quotient graph $\GG_n/\sigma_1$.  Similarly, let $H_2 =\GG_n/\sigma_2$, and $H_3=\GG_n/\langle\sigma_1\sigma_2\rangle$.  In \cite{BN2}, Baker and Norine define the pullback map $\phi^*: \Jac(H_i) \rightarrow \Jac(\GG_n)$ as follows: given a divisor $\hat{D}$ on $H_i$ we define $\phi^*(\hat{D})$ to be the divisor on $\GG_n$ so that if $v$ is a vertex of $\GG_n$ with $\phi(v) = \hat{v}$ then $\phi^*(\hat{D})(v) = m_\phi(v)\hat{D}(\hat{v})$ where $m_\phi(v)$ is the horizontal multiplicity of the map at the vertex $v$.

In particular, Baker and Norine show that this map is not only well-defined but also injective, and thus that for each $i$, we can view $\phi^*(\Jac(H_i))$ as a subgroup of $\Jac(\GG_n)$. Throughout this note, we will typically omit the pullback map and write $\Jac(H_i)$ to denote its pullback as a subgroup of $\Jac(\GG_n)$. Given that each $\Jac(H_i)$ is a subgroup of $\Jac(\GG_n)$, we wish to determine whether the direct sum of the three is also a subgroup.  In order to do this, we will need to consider the intersections and sums of the three sets.  We begin by analyzing $\Jac(H_1)+\Jac(H_2)$.

Define $\CP_i$ to be the set of divisors on $\GG$ which can be obtained as the pullback of a divisor of degree zero on $H_i$.  The following lemma follows immediately from the definitions:

\begin{lemma}

The following conditions are necessary and sufficient for a divisor to be in $\CP_i$:
\begin{itemize}
\item A divisor $\delta$ is in $\CP_1$ if and only if $\delta(x^j_i)=\delta(y^j_{n+1-i})$ for all $i,j$ and, if $n$ is odd then $\delta(z^j_\frac{n+1}{2})$ is even for all $j$.
\item A divisor $\delta$ is in $\CP_2$ if and only if $\delta(x^j_i)=\delta(y^j_{n+2-i})$ for all $i,j$, $\delta(z^j_1)$ is even for all $j$ and, if $n$ is even then $\delta(z^j_\frac{n+2}{2})$ is even for all $j$.
\item A divisor $\delta$ is in $\CP_3$ if and only if for each fixed $j$ the values of $\delta(x_i^j)$, $\delta(y_i^j)$, and $\delta(z_i^j)$ are each constant as $i$ varies.
\end{itemize}
\end{lemma}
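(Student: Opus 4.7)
The plan is to unwind the definition of the pullback directly. Let $\phi^{(i)}:\GG_n\to H_i$ denote the three quotient morphisms. Since the underlying $D_n$-action is harmonic, so is each subgroup action, and the horizontal multiplicity $m_{\phi^{(i)}}(v)$ equals the order of the stabilizer of $v$ in the acting subgroup. Thus $\delta\in \CP_i$ if and only if there exists a divisor $\hat D$ on $H_i$ with $\delta(v)=m_{\phi^{(i)}}(v)\,\hat D(\phi^{(i)}(v))$ for every vertex $v$. Such a $\hat D$ exists precisely when (a) $m_{\phi^{(i)}}(v)$ divides $\delta(v)$ at every vertex whose stabilizer is nontrivial, and (b) the quantity $\delta(v)/m_{\phi^{(i)}}(v)$ is constant on each orbit, so that it descends to a function on the vertices of $H_i$. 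The degree-zero requirement on $\hat D$ is automatic from the one on $\delta$ because $\deg(\phi^{(i)*}\hat D)=|H|\cdot\deg(\hat D)$, where $H$ is the acting subgroup.

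Given this reformulation, the lemma reduces to a case-by-case identification of stabilizers read off from the formulas for $\sigma_1$ and $\sigma_2$. Under $\sigma_1$, each $\{x^j,y^j\}$-orbit is paired as $x_i^j\leftrightarrow y_{n+1-i}^j$ with no fixed points, while each $z^j$-orbit is paired as $z_i^j\leftrightarrow z_{n+1-i}^j$, which has a unique fixed vertex $z_{(n+1)/2}^j$ exactly when $n$ is odd (the equation $2i\equiv n+1\pmod n$ is solvable only in that case). Condition (b) then enforces $\delta(x_i^j)=\delta(y_{n+1-i}^j)$ and $\delta(z_i^j)=\delta(z_{n+1-i}^j)$, while condition (a) at $z_{(n+1)/2}^j$ contributes the parity requirement when $n$ is odd, matching the stated characterization of $\CP_1$. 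A parallel analysis for $\sigma_2$ identifies the fixed $z$-vertices as $z_1^j$ (for all $n$, since $i\equiv n+2-i\pmod n$ always has $i=1$ as a solution) together with $z_{(n+2)/2}^j$ when $n$ is even, and the pairing on $\{x^j,y^j\}$-orbits as $x_i^j\leftrightarrow y_{n+2-i}^j$, giving the stated characterization of $\CP_2$.

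For $\CP_3$ I would note that $\langle\sigma_1\sigma_2\rangle$ is cyclic of order $n$, acting by the shift $v_i^j\mapsto v_{i-1}^j$; every $D_n$-orbit of size $n$ is a single orbit under this cyclic subgroup, and every $D_n$-orbit of size $2n$ splits into two cyclic orbits of size $n$ each (one on the $x$'s, one on the $y$'s). In particular the cyclic subgroup acts freely, so $m_{\phi^{(3)}}\equiv 1$ and condition (b) reduces to $\delta$ being constant on each of the sets $\{x_i^j\}_i$, $\{y_i^j\}_i$, and $\{z_i^j\}_i$, matching the lemma. I do not expect a serious obstacle: as the author indicates, the result is immediate from the definitions once the stabilizer structure is extracted, and the only thing requiring care is keeping the three quotient maps and their multiplicities conceptually separate.
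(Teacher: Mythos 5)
Your proposal is correct and matches the paper's intent exactly: the paper offers no written proof (it declares the lemma immediate from the definitions), and your argument simply makes explicit the stabilizer/multiplicity computation that also underlies the paper's later Lemma 4.1. As a minor bonus, your derivation also recovers the symmetry conditions $\delta(z_i^j)=\delta(z_{n+1-i}^j)$ (resp.\ $\delta(z_i^j)=\delta(z_{n+2-i}^j)$), which the lemma's statement omits but which the paper tacitly uses in the proof of Theorem \ref{T:P12}.
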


We wish to consider the set $\CP_1+\CP_2$.

\begin{theorem}\label{T:P12}
A divisor $\delta$ of degree $0$ on $\GG_n$ can be written as the sum of a divisor $\delta_1 \in \CP_1$ and a divisor $\delta_2 \in \CP_2$ if and only if the following conditions all hold:

\begin{enumerate}[(1)]
\item $\sum_{i=1}^n i(X_i+Y_i+Z_i) \equiv 0$ mod $n$, where $X_i$ (resp. $Y_i, Z_i$) is the sum of the values of the divisor on all vertices $x^j_i$ (resp. $y^j_i, z^j_i$).
\item For each orbit of order $2n$ given by $\{x_i,y_i\}$, we have that $\sum_i \delta(x_i) = \sum_i \delta(y_i)$.
\item For each orbit of order $n$ given by $\{z_i\}$ we have that $\sum_i \delta(z_i) \equiv 0$ mod $2$.
\end{enumerate}
\end{theorem}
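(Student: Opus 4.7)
My plan is to work orbit by orbit: conditions (2) and (3) will each be shown to govern the local existence of a decomposition on individual $D_n$-orbits, while condition (1) will encode the remaining global obstruction to making $\deg\delta_1=\deg\delta_2=0$ simultaneously.

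\textbf{Orbit-by-orbit analysis.} If $\delta=\delta_1+\delta_2$ with $\delta_i\in\CP_i$, then on each size-$2n$ orbit $\{x^j_k,y^j_k\}$ I introduce local variables $a_k=\delta_1(x^j_k)$ and $b_k=\delta_2(x^j_k)$; the $\CP_i$ symmetries from the lemma translate into $a_k+b_k=\delta(x^j_k)$ together with $a_k+b_{k+1}=\delta(y^j_{n+1-k})$, which reduces to the cyclic recurrence $b_{k+1}-b_k=\delta(y^j_{n+1-k})-\delta(x^j_k)$. Going once around, the consistency condition $\sum_k[\delta(y^j_{n+1-k})-\delta(x^j_k)]=0$ is exactly condition (2) on that orbit, and conversely (2) produces a one-parameter family of local solutions with free starting value $b_1^{(j)}\in\ZZ$. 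On each size-$n$ orbit $\{z^j_k\}$, analogous local variables $c_k=\delta_1(z^j_k)$ and $d_k=\delta_2(z^j_k)$ give $d_{k+1}-d_k=\delta(z^j_{n+1-k})-\delta(z^j_k)$, which always closes; the parity restrictions at $\sigma$-fixed vertices ($d_1$ even always, $c_{(n+1)/2}$ even when $n$ is odd, $d_{(n+2)/2}$ even when $n$ is even) force $\sum_k\delta(z^j_k)\equiv 0\pmod 2$, i.e.\ condition (3), and conversely (3) produces a one-parameter family with free even starting value $d_1^{(j)}$.

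\textbf{Global obstruction and condition (1).} Once (2) and (3) are assumed, I have candidate $\delta_1,\delta_2$ with the right $\CP_i$-symmetries on every orbit, and the only thing left is to arrange $\deg\delta_2=0$ (equivalently $\deg\delta_1=0$). Fix the free parameters to $b_1^{(j)}=0$ and $d_1^{(j)}=0$, write $B_0^{(j)}=\sum_k b_k$ and $D_0^{(j)}=\sum_k d_k$ for the resulting closed-form sums, and compute them by swapping the order in $\sum_i\sum_{k<i}$ and reindexing $m=n+1-k$. The outcome is the exact integer identity
\[
2\sum_j B_0^{(j)}+\sum_j D_0^{(j)}=2\sum_{m=1}^n m\,(X_m+Y_m+Z_m)-2(n+1)X-(n+1)Z.
\]
Condition (2) gives $X=Y$, so the degree-zero hypothesis forces $2X+Z=0$, and the last two terms on the right cancel, leaving $2\sum_m m(X_m+Y_m+Z_m)$. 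Shifting $b_1^{(j)}$ by $1$ changes $2B^{(j)}$ by $2n$, and shifting $d_1^{(j)}$ by $2$ (preserving evenness) changes $D^{(j)}$ by $2n$, so the set of values $\deg\delta_2$ can take is the coset $2\sum_m m(X_m+Y_m+Z_m)+2n\ZZ$. Hence $\deg\delta_2=0$ is achievable iff $\sum_m m(X_m+Y_m+Z_m)\equiv 0\pmod n$, which is condition (1). Necessity of (2) and (3) is immediate from the lemma, and necessity of (1) is the same identity run separately for each $\delta_i\in\CP_i$: both values are multiples of $n$ (exactly $0$ for $\delta_1$, and of the form $-n\sum_j(X_1^j+Z_1^j/2)$ for $\delta_2$, using the parity that $Z_1^j$ is even), so their sum is divisible by $n$.

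\textbf{Main obstacle.} The delicate step is the exact integer identity above. Its precise form is what forces (1) to take the stated mod-$n$ shape (rather than, say, mod-$(n/2)$ when $n$ is even), and producing it requires uniform handling of the parity constraint on the free parameter $d_1^{(j)}$ across $n$ odd and $n$ even, which in turn rest on the different fixed-point structures of $\sigma_1$ and $\sigma_2$ on each size-$n$ orbit. Verifying that condition (3) is exactly strong enough to make all of these parity constraints simultaneously consistent, and to keep $\sum_j D_0^{(j)}$ even so that the full obstruction is a multiple of $2$ and not merely of $n$, is where the bookkeeping will demand the most care.
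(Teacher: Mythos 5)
Your proof is correct and follows essentially the same route as the paper: the paper's explicit partial-sum formulas for $\delta_1,\delta_2$ are precisely the closed-form solutions of your orbit-wise recurrences, its free constants $a_j,b_j$ play the role of your parameters $b_1^{(j)},d_1^{(j)}$, and its degree computation is your coset identity with the constant fixed outright to $a=\frac{1}{n}\sum_i i(X_i+Y_i+Z_i)$. The only difference is presentational (explicit construction versus describing the full coset of achievable degrees), so no further comparison is needed.
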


\begin{proof}
Let $\delta \in \CP_1+\CP_2$. Consider an orbit in $\GG_n$ consisting of $2n$ points $x_1,y_1,\ldots,x_n,y_n$.  Then a divisor $\delta$ is in $\CP_1+\CP_2$ only if it can be written as $\delta=\delta_1+\delta_2$ where $\delta_1(x_i)=\delta_1(y_{n+1-i})$ and $\delta_2(x_i)=\delta_2(y_{n+2-i})$.  In particular, this implies that $\sum_i \delta(x_i) = \sum_i \delta(y_i)$.  Similarly, if one considers an orbit consisting of $n$ points $z_1,\ldots,z_n$ then we have $\delta_1(z_i)=\delta_1(z_{n+1-i})$ and $\delta_2(z_i)=\delta_2(z_{n+2-i})$.  Additionally, we have the restriction that $\delta_2(z_1)$ is even, as is $\delta_1(z_{\frac{n+1}{2}})$ (resp. $\delta_2(z_{\frac{n+2}{2}})$) if $n$ is odd (resp. if $n$ is even). In particular, one can deduce that $\sum_i\delta(z_i)$ is even.

To show that the third condition is necessary, we begin by considering a divisor $\delta_1 \in \CP_1$.  The fact that $\delta_1(x^j_i)=\delta_1(y^j_{n+1-i})$ implies that the sum $X_i$ will be equal to the sum $Y_{n+1-i}$ for all $i$.  We also have that $Z_i=Z_{n+1-i}$ and that $Z_{\frac{n+1}{2}}$ is even (if $n$ is even we define $Z_{\frac{n+1}{2}}=0$).  Finally, the fact that $\delta_1$ is a pullback of a degree zero divisor tells us that $\displaystyle \sum_{i=1}^n X_i +\sum_{i=1}^{\lfloor n/2 \rfloor}Z_i + \frac{1}{2}Z_{\frac{n+1}{2}}=0$.  We can then compute that

\begin{eqnarray*}
& & \sum_{i=1}^n i(X_i+Y_i+Z_i) \\
&=&\sum_{i=1}^n (i+(n+1-i)X_i + \sum_{i=1}^{\lfloor n/2 \rfloor}(i+(n+1-i))Z_i + \frac{n+1}{2}Z_{\frac{n+1}{2}}\\
&=&\sum_{i=1}^n (n+1)X_i +\sum_{i=1}^{\lfloor n/2\rfloor}(n+1)Z_i + \frac{n+1}{2}Z_{\frac{n+1}{2}}\\
&=&(n+1)(\sum_{i=1}^n X_i +\sum_{i=1}^{\lfloor n/2 \rfloor}Z_i + \frac{1}{2}Z_{\frac{n+1}{2}})\\
&=&0
\end{eqnarray*}

To see that a divisor $\delta_2 \in \CP_2$ will satisfy the global condition, we note that for such a divisor we have $X_i=Y_{n+2-i}$, $Z_i=Z_{n+2-i}$, $Z_1$ even and $Z_{\frac{n+2}{2}}$ even (where we define $Z_{\frac{n+2}{2}} = 0$ if $n$ is odd). Additionally, the restriction that $\delta_2$ is a pullback of a degree zero divisor tells us that $\displaystyle \sum_{i=1}^n X_i +\sum_{i=2}^{\lfloor \frac{n}{2} \rfloor}Z_i + \frac{1}{2}(Z_1+Z_{\frac{n+2}{2}})=0$.  We can now compute:

\begin{eqnarray*}
& &\sum_{i=1}^n i(X_i+Y_i+Z_i) \\
&=&\sum_{i=1}^n (i+n+2-i)X_i + Z_1+ \sum_{i=2}^{\lfloor n/2\rfloor}(i+n+2-i)Z_i + \frac{n+2}{2}Z_{\frac{n+2}{2}}\\
&=&\sum_{i=1}^n (n+2)X_i +Z_1 + \sum_{i=2}^{\lfloor n/2 \rfloor}(n+2)Z_i + \frac{n+2}{2}Z_{\frac{n+2}{2}}\\
&\equiv &(n+2)(\sum_{i=1}^n X_i +\sum_{i=2}^{\lfloor n/2\rfloor}Z_i + \frac{1}{2}(Z_1+Z_{\frac{n+2}{2}})) \text{ (mod $n$)}\\
& \equiv &0 \text{ (mod $n$)}
\end{eqnarray*}

Because these conditions are linear and any divisor in $\CP$ can be written as the sum of divisors in the $\CP_i$, we have therefore shown that all divisors in $\CP_1+\CP_2$ satisfies this condition as well.

We next wish to show that the three conditions are sufficient. In particular, assume that $\delta$ is a divisor of degree zero satisfying the three conditions of the theorem, and let $a= \frac{1}{n}\sum_i i(X_i+Y_i+Z_i)$.  We define divisors $\delta_1$ and $\delta_2$ as follows:

\begin{eqnarray*}
\delta_1(x_i^j) &=& (\delta(x_1^j) + \ldots + \delta(x_i^j)) - (\delta(y_n^j)+\ldots + \delta(y_{n+2-i}^j)) + a_j\\
\delta_1(y_i^j) &=& \delta_1(x_{n+1-i}^j) \\
\delta_1(z_i^j) &=& (\delta(z_1^j)+ \ldots +\delta(z_i^j)) - (\delta(z_n^j)+ \ldots +\delta(z_{n+2-i}^j)) + b_j \\
\delta_2(x_i^j) &=& (\delta(y_n^j)+\ldots + \delta(y_{n+2-i}^j) - (\delta(x_1^j) + \ldots + \delta(x_{i-1}^j)) - a_j\\
\delta_2(y_i^j) &=& \delta_2(x_{n+2-i}^j) \\
\delta_2(z_i^j) &=& (\delta(z_n^j)+ \ldots +\delta(z_{n+2-i}^j))-(\delta(z_1^j)+ \ldots +\delta(z_{i-1}^j)) - b_j \\
\end{eqnarray*}

\noindent where $a_1=a$ and all other $a_j=b_j=0$ if there is at least one orbit of size $2n$ and $b_1=2a$ and all other $b_j=0$ if all orbits have size $n$.

It is clear by definition that $\delta_1$ and $\delta_2$ have the necessary symmetry conditions to be in $\CP_1$ and $\CP_2$, and straightforward computations shows that $\delta_1 + \delta_2 = \delta$ and that the necessary $\delta_1(z_i^j)$ and $\delta_2(z_i^j)$ are even. It remains only to show that $\delta_1$ and $\delta_2$ each have degree zero.  To see the former, we note that for any fixed $j$ we have:

\begin{eqnarray*}
\sum_i \delta_1(z_i^j) &=&  nb_j + \sum_i (n+2-2i) \delta(z_i^j)\\
\sum_i \delta_1(x_i^j) + \sum_i \delta_1(y_i^j) &=&  2na_j + \sum_i (n+2-2i)(\delta(x_i^j) + \delta(y_i^j)) \\
\end{eqnarray*}

In particular, we have that

\begin{eqnarray*}
\sum_{v \in \GG_n} \delta_1(v) &=& 2na + \sum_i (n+2-2i)(X_i+Y_i+Z_i)\\
&=& 2na + (n+2)(deg(\delta)) + \sum_i (-2i)(X_i+Y_i+Z_i) \\
&=& 2na-2na\\
&=&0
\end{eqnarray*}

It follows that $\delta_1 \in \CP_1$.  Furthermore, because $\delta$ and $\delta_1$ have total degree zero and $\delta_2 = \delta-\delta_1$ it is immediate that $\delta_2 \in \CP_2$.

\end{proof}

Let $\CP = \CP_1 + \CP_2 + \CP_3$ be the set of divisors on $\GG$ which can be written as a sum of divisors that are pullbacks of divisors of degree zero on $H_1, H_2,$ and $H_3$.

\begin{theorem}\label{T:P123odd}
Let $n$ be odd and let $\delta$ be a divisor of degree zero on $\GG_n$.  Then $\delta \in \CP$ if and only if the following conditions hold.
\begin{enumerate}[(1)]
\item $\sum_{i=1}^n i(X_i+Y_i+Z_i) \equiv 0$ mod $n$, where $X_i$ (resp. $Y_i, Z_i$) is the sum of the values of the divisor on all vertices $x^j_i$ (resp. $y^j_i, z^j_i$). \label{Global}
\item For all orbits $\{x_i,y_i\}$ of size $2n$ we have $\sum_i \delta(x_i) \equiv \sum_i \delta(y_i)$ (mod $n$).
\end{enumerate}
\end{theorem}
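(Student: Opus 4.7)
The plan is to use Theorem \ref{T:P12} as a black box and analyze how translating by an element of $\CP_3$ changes the conditions there. Since $\CP=\CP_3+(\CP_1+\CP_2)$ by definition, membership $\delta\in\CP$ is equivalent to the existence of some $\delta_3\in\CP_3$ for which $\delta-\delta_3$ satisfies the three conditions (a), (b), (c) of Theorem \ref{T:P12}. The goal is to show that conditions (1) and (2) in the present statement are exactly the obstructions that survive after this quotient.

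For necessity, observe that any $\delta_3\in\CP_3$ is constant on each orbit, say equal to constants $c_j^x$, $c_j^y$, $c_j^z$. Hence $\sum_i\delta_3(v_i^j)=nc$, and because $n$ is odd, $(n+1)/2$ is an integer, so $\sum_{i=1}^n i\cdot\delta_3(v_i^j)=c\cdot n(n+1)/2\equiv 0\pmod n$. It follows that condition (1) for $\delta$ is equivalent to condition (a) of Theorem \ref{T:P12} applied to $\delta-\delta_3$, and condition (2) for $\delta$ follows from condition (b) for $\delta-\delta_3$ together with the identity $\sum_i\delta_3(v_i^j)\equiv 0\pmod n$. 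The mod-$2$ condition (c) of Theorem \ref{T:P12} imposes no constraint visible at this level, which is why no analogous hypothesis appears here.

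For sufficiency, given $\delta$ satisfying (1) and (2), I will construct $\delta_3\in\CP_3$ explicitly. On each size-$2n$ orbit $\{x_i^j,y_i^j\}$, I impose $c_j^y-c_j^x=(\sum_i\delta(y_i^j)-\sum_i\delta(x_i^j))/n$, which is an integer by condition (2); this forces condition (b) of Theorem \ref{T:P12} for $\delta-\delta_3$ on that orbit. On each size-$n$ orbit $\{z_i^j\}$, I impose $c_j^z\equiv\sum_i\delta(z_i^j)\pmod 2$, which makes (c) hold; this again uses $n$ odd, so that $nc_j^z$ and $c_j^z$ have the same parity. Condition (a) is then automatic by the same computation as in the necessity direction, since subtracting $\delta_3$ changes the mod-$n$ quantity in (a) by $0$.

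The main obstacle is arranging that $\delta_3$ has total degree zero. The recipe above fixes only the differences $c_j^y-c_j^x$ and the parities of the sums $c_j^x+c_j^y$ and of each $c_j^z$, leaving the remaining magnitudes free. The equation $\deg\delta_3=n\bigl(\sum_j(c_j^x+c_j^y)+\sum_j c_j^z\bigr)=0$ is then an integer equation whose total parity on the left is forced by the preceding choices. A short calculation using $\deg\delta=0$ together with $n$ odd shows that this forced parity is even (essentially because the forced parity reduces to $Y-X+Z\equiv -2X\equiv 0\pmod 2$, where $X,Y,Z$ denote the total $\delta$-mass on the $x$-, $y$-, and $z$-vertices), so integer representatives summing to zero exist and $\delta_3$ can be chosen as required, completing the proof.
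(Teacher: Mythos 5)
Your proof is correct and follows essentially the same route as the paper: reduce to Theorem \ref{T:P12} by subtracting a suitable element of $\CP_3$, with necessity coming from the invariance of conditions (1) and (2) under translation by constants on orbits (using $n$ odd so that $n(n+1)/2\equiv 0\pmod n$), and sufficiency hinging on the same parity computation from $\deg\delta=0$ that the paper carries out when checking $\sum_i\delta_{12}(z_i^1)$ is even. The only difference is cosmetic: the paper writes down one explicit choice of the constants (loading the degree correction onto the orbit $z^1$), whereas you argue existence of a valid choice via the free parameters, which if anything handles the $s=0$ case a bit more cleanly.
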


\begin{proof}
We note that it follows from Theorem \ref{T:P12} that divisors in $\CP_1+\CP_2$ satisfy the above conditions. We next consider a divisor $\delta_3 \in \CP_3$.  For any fixed $j$, $\delta_3(x^j_i)$ will be constant for all $i$, as will $\delta_3(y^j_i)$ and $\delta_3(z^j_i)$.  Thus it is easy to see that $\sum_{i=1}^n x^j_i \equiv \sum_{i=1}^n y^j_i \equiv 0$ (mod $n$). Moreover, the sums $X_i, Y_i$ and $Z_i$ will be independent of $i$ and one can quickly deduce that $\delta_3$ also satisfies the global condition.

We further claim that these conditions are also sufficient to define $\CP$.  In particular, let $\delta$ be a divisor satisfying these conditions.  Let $a_j = \frac{1}{n}\sum_{i=1}^n (\delta(x^j_i)-\delta(y^j_i))$.  Define a divisor $\delta_3$ so that $\delta_3(x_i^j)=a_j$ for all $i$ and $\delta_3(y_i^j)=0$ for all $i,j$. For all $j \ge 2$ we define $\delta_3(z_i^j)=0$ if $\sum_i \delta(z_i^j)$ is even and $\delta_3(z_i^j)=1$  if $\sum_i \delta(z_i^j)$ is odd.  Finally, for all $i$ we define $\delta_3(z_i^1)=-(\sum_j a_j)- \gamma$ where $\gamma$ is the number of $j \ge 2$ for which $\sum_i \delta(z_i^j)$ is odd.  None of these depend on the choice of $i$ and moreover it is clear that $\sum_{v \in \GG_n} \delta_3(v) = 0$ so it follows that $\delta_3 \in \CP_3$.

Let $\delta_{12}=\delta-\delta_3$. One can easily check that $\sum_{v \in \GG_n} \delta_{12}(v)=0$ and that for any $j$ we have $\sum_i \delta_{12}(x_i^j)=\sum_i \delta_{12}(y_i^j)$. It is also clear that for $j \ge 2$ we have that $\sum_i \delta_{12}(z_i^j)$ is even.  For $j=1$, we compute:

\begin{eqnarray*}
\sum_{i=1}^n \delta_{12}(z_i^1) &=& \sum_{i=1}^n \delta(z_i^1) +n \sum_j a_j + n \gamma \\
&=& \sum_{i=1}^n \delta(z_i^1) + \sum_{i,j}(\delta(x_i^j)-\delta(y_i^j)) + n \gamma \\
& \equiv & \sum_{i=1}^n \delta(z_i^1) + \sum_{i,j}(\delta(x_i^j)+\delta(y_i^j)) +  \gamma\\
&=& \sum_{v \in \GG_n} \delta(v) \\
&=& 0\\
\end{eqnarray*}

\noindent where the congruence is mod $2$, showing that $\sum_{i=1}^n \delta_{12}(z_i^1)$ is even as well. Finally, one can see that, given that both $\delta$ and $\delta_3$ satisfy the global condition $(\ref{Global})$ it must also be the case that $\delta_{12}$ does as well. It follows from Theorem \ref{T:P12} that $\delta_{12}$ can be expressed as the sum of a divisor $\delta_1 \in \CP_1$ and a divisor $\delta_2 \in \CP_2$.  This proves the theorem.
\end{proof}

\begin{theorem}\label{T:P123even}
Let $n$ be even and let $\delta$ be a divisor of degree zero on $\GG_n$.  Then $\delta \in \CP$ if and only if the following conditions hold.
\begin{enumerate}[(1)]
\item $\sum_{i=1}^n i(X_i+Y_i+Z_i) \equiv 0$ mod $n$, where $X_i$ (resp. $Y_i, Z_i$) is the sum of the values of the divisor on all vertices $x^j_i$ (resp. $y^j_i, z^j_i$).
\item For each orbit of order $2n$ given by $\{x_i,y_i\}$ we have $\sum_i \delta(x_i) \equiv \sum_i \delta(y_i)$ (mod $n$).
\item For each orbit of order $n$ given by $\{z_i\}$ we have $\sum_i \delta(z_i) \equiv 0$ mod $2$. \label{zeven}
\end{enumerate}
\end{theorem}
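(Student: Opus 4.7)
The plan is to follow the structure of the proof of Theorem \ref{T:P123odd}, modified to reflect that when $n$ is even the parity condition (3) is a genuine constraint: no divisor $\delta_3 \in \CP_3$ can shift it, since each such $\delta_3$ contributes $n$ times a constant to every $z$-orbit sum, which is automatically even.

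For necessity, conditions (1), (2), and (3) for divisors in $\CP_1 + \CP_2$ are contained in Theorem \ref{T:P12} (its condition (2) is stated as an equality, which certainly implies the congruence mod $n$ asked for here). For $\delta_3 \in \CP_3$, all values are constant in $i$ on each orbit; hence $\sum_i \delta_3(x_i^j) = n\delta_3(x_1^j)$ (giving (2) mod $n$, and similarly for $y$), $\sum_i \delta_3(z_i^j) = n\delta_3(z_1^j)$ is even (giving (3) since $n$ is even), and $X_i + Y_i + Z_i$ is independent of $i$ and equal to $\frac{1}{n}$ times the total degree of $\delta_3$, which is $0$; so (1) holds trivially.

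For sufficiency, given $\delta$ satisfying the three conditions, I construct $\delta_3 \in \CP_3$ so that $\delta_{12} := \delta - \delta_3$ falls under Theorem \ref{T:P12} and hence decomposes as a sum of a divisor in $\CP_1$ and a divisor in $\CP_2$. The construction mirrors the odd-$n$ case but is simpler because condition (3) is now a hypothesis on $\delta$, so no parity corrections on $z$-orbits are needed. Concretely: let $a_j = \frac{1}{n}\sum_i(\delta(x_i^j) - \delta(y_i^j))$, which is an integer by (2); define $\delta_3(x_i^j) = a_j$, $\delta_3(y_i^j) = 0$, $\delta_3(z_i^j) = 0$ for $j \geq 2$, and $\delta_3(z_i^1) = -\sum_j a_j$ for all $i$. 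Then $\delta_3$ is constant on each orbit and has total degree $0$, hence lies in $\CP_3$. The difference $\delta_{12}$ satisfies $\sum_i \delta_{12}(x_i^j) = \sum_i \delta_{12}(y_i^j)$ with equality, inherits condition (1) from $\delta$ and $\delta_3$, and preserves the parity on each $z$-orbit since $\delta_3$ contributes $n \cdot (\text{constant})$, an even integer; Theorem \ref{T:P12} then produces the desired decomposition.

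The main obstacle I anticipate is the degenerate case $s = 0$ in which there are no $z$-orbits to absorb the correction $-\sum_j a_j$; then the construction of $\delta_3$ must instead redistribute the imbalance among the $x$-orbits, and one has to verify that this rebalancing still yields an element of $\CP_3$ while keeping conditions (1), (2), and (3) on $\delta_{12}$. This same implicit assumption already appears in the odd-$n$ argument, so it is handled analogously (or by a separate short check when $s = 0$, using condition (1) to constrain $\sum_j a_j$ modulo $n$).
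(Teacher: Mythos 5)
Your proof follows the paper's essentially verbatim: the same necessity argument (Theorem \ref{T:P12} for $\CP_1+\CP_2$, constancy on $\langle\sigma_1\sigma_2\rangle$-orbits for $\CP_3$) and the identical correction divisor $\delta_3$ with $\delta_3(x_i^j)=a_j$, $\delta_3(y_i^j)=0$, $\delta_3(z_i^1)=-\sum_j a_j$, $\delta_3(z_i^j)=0$ for $j\ge 2$. The one point worth flagging is your closing paragraph: the paper makes the same tacit assumption $s\ge 1$ (compare Corollary \ref{C:DP}, whose even-$n$ formula involves $(\ZZ/2\ZZ)^{s-1}$), but your hope that the $s=0$ case can be repaired ``analogously'' is too optimistic when $n$ is even --- redistributing $-\sum_j a_j$ over the $x$- and $y$-orbits while keeping each difference $c_j-d_j=a_j$ and total degree zero forces $\sum_j a_j$ to be even, and the three stated conditions do not imply this (for the $4$-cycle with $n=2$, $t=1$, $s=0$, the divisor taking values $(1,0,-1,0)$ on $(x_1,x_2,y_1,y_2)$ satisfies all three conditions yet every element of $\CP$ has $\delta(x_1)+\delta(x_2)$ even). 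So for $s\ge 1$ your argument, like the paper's, is complete; for $s=0$ and $n$ even a genuine extra parity condition is needed rather than a routine check, and condition (1) does not supply it.
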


\begin{proof}
We begin by noting that the argument that the first two conditions are necessary in the proof of Theorem \ref{T:P123odd} carries over exactly to this case.  Moreover, for any divisor $\delta_3 \in \CP_3$ and any orbit of order $n$ we have $\sum_i \delta(z_i) = n\delta(z_1) \equiv 0$ mod $2$.  Because this condition is also true for divisors in $\CP_1+\CP_2$ by Theorem \ref{T:P12} then it must be necessary for divisors in $\CP$.

To see that these conditions are sufficient, we let $\delta$ be a divisor satisfying them and, as above, we set $a_j = \frac{1}{n}\sum_{i=1}^n (\delta(x^j_i)-\delta(y^j_i))$.  Define a divisor $\delta_3$ as follows:
\[\delta_3(x_i^j)=a_j \text{ for all }i\]
\[\delta_3(y_i^j)=0\text{ for all }i,j\]
\[\delta_3(z_i^1)=-(\sum_j a_j)\]
\[\delta_3(z_i^j)=0 \text{ if }j \ge 2\]

One can easily check that $\delta_3 \in \CP_3$.  Moreover, if we set $\delta_{12}=\delta-\delta_3$ then it is straightforward to see that $\delta_{12}$ satisfies the conditions of Theorem \ref{T:P12} and is therefore in $\CP_1+\CP_2$.  The theorem follows.
\end{proof}

The following corollary is an immediate consequence.

\begin{corollary} \label{C:DP}
Let $\CD$ be the set of divisors of degree zero on $\GG_n$.  If $n$ is odd then $\CD/\CP \cong (\ZZ/n\ZZ)^{t+1}$.  If $n$ is even then $\CD/\CP \cong (\ZZ/n\ZZ)^{t+1} \oplus (\ZZ/2\ZZ)^{s-1}$
\end{corollary}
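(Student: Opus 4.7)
The plan is to apply the first isomorphism theorem: construct a surjective homomorphism $\phi : \CD \to G$ whose kernel is exactly $\CP$, where $G$ is the target group appearing in the statement. In each case the coordinates of $\phi$ will be the congruence conditions identified in Theorems \ref{T:P123odd} and \ref{T:P123even}, so the kernel computation will be immediate from those theorems. The remaining work is (i) checking, in the even case, that the number of independent parity conditions is $s - 1$ rather than $s$, and (ii) verifying surjectivity.

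For $n$ odd, take $G = (\ZZ/n\ZZ)^{t+1}$ and set
\[
\phi(\delta) = \left(\sum_{i=1}^n i(X_i + Y_i + Z_i),\ h_1(\delta), \ldots, h_t(\delta)\right) \bmod n,
\]
where $h_j(\delta) = \sum_i \delta(x_i^j) - \sum_i \delta(y_i^j)$. Theorem \ref{T:P123odd} identifies $\ker\phi$ with $\CP$, so only surjectivity remains. One writes down an explicit preimage for each standard basis vector: when $s \geq 1$, the degree-zero divisor with value $1$ at $z_1^1$ and $-1$ at $z_n^1$ maps to the global basis vector (since $1 \cdot 1 + n \cdot (-1) \equiv 1 \pmod n$), and the divisor with value $1$ at $x_n^j$ and $-1$ at $z_n^1$ maps to the $j$-th $h$-basis vector. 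When $s = 0$, one uses divisors supported on a single $2n$-orbit: the divisor with value $1$ at $x_1^j$ and $-1$ at $y_1^j$ hits $(0, \ldots, 2, \ldots, 0)$, and since $n$ is odd the element $2$ is a unit mod $n$, so these combinations suffice.

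For $n$ even, take $G = (\ZZ/n\ZZ)^{t+1} \oplus (\ZZ/2\ZZ)^{s-1}$ and augment $\phi$ by the $s - 1$ parity coordinates $k_{j'}(\delta) = \sum_i \delta(z_i^{j'}) \bmod 2$ for $j' = 2, \ldots, s$. The key observation is that the parity condition at $j' = 1$ from Theorem \ref{T:P123even} is automatic given the other conditions and $\deg \delta = 0$: when $n$ is even, $\sum_i \delta(x_i^j) \equiv \sum_i \delta(y_i^j) \pmod n$ forces $\sum_i(\delta(x_i^j) + \delta(y_i^j))$ to be even for every $j$, so once we impose $k_{j'}(\delta) \equiv 0 \pmod 2$ for $j' \geq 2$, the degree-zero condition pins down the parity of $\sum_i \delta(z_i^1)$. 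Thus $\ker \phi = \CP$, and surjectivity is established as in the odd case, together with the divisor taking value $1$ at $z_n^{j'}$ and $-1$ at $z_n^1$, which hits the $\ZZ/2\ZZ$-basis vector in position $j'$ with all other coordinates vanishing.

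The main obstacle is the even-case redundancy: it is precisely the observation that explains the $s - 1$ (rather than $s$) parity factors in the target group, and it is easy to miscount by $\ZZ/2\ZZ$ if one does not notice that the mod-$n$ symmetry conditions already encode a mod-$2$ constraint when $n$ is even. Everything else amounts to routine bookkeeping, with minor care needed for edge cases where $s = 0$ or $t$ is very small.
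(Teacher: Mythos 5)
Your proposal is correct and is exactly the argument the paper intends: the paper dismisses this corollary as an ``immediate consequence'' of Theorems \ref{T:P123odd} and \ref{T:P123even}, and your homomorphism whose coordinates are the congruence conditions of those theorems, together with the surjectivity check and the observation that the $j=1$ parity condition is redundant in the even case (which is precisely where the exponent $s-1$ comes from), is the fully written-out version of that. No substantive differences to report.
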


\section{Intersections of the sets $\CP_i$}

In this section, we consider the relationships between the divisors in $\Jac(H_1) \oplus \Jac(H_2) \oplus \Jac(H_3)$ and the divisors in $\CP$.  In particular, one can define a natural surjective map \[\Jac(H_1) \oplus \Jac(H_2) \oplus \Jac(H_3) \rightarrow \Jac(H_1) + \Jac(H_2) + \Jac(H_3)\]
This map will be injective if and only if the sets $\CP_1, \CP_2, \CP_3$ are all disjoint.  In this section, we consider these intersections and their implications about the kernel of this map.  In order to do this, we first prove the following lemma, which actually applies to a more general situations than the graphs $\GG_n$ defined in Section \ref{S:notation}.

\begin{lemma}\label{L:orbits}
Let $G$ be a graph and let $\Gamma$ be a group which acts harmonically on $G$ so that the quotient $G/\Gamma$ is a graph $\hat{G}$.  Let $\delta$ be a divisor of degree zero on $G$.  Then $\delta$ is the pullback of a divisor $\hat{\delta}$ of degree zero on $\hat{G}$ if and only if the following two conditions hold:
\begin{itemize}
\item $\delta$ is constant on $\Gamma$-orbits of $G$.  In particular, $\delta(v)=\delta(g\cdot v)$ for all $g \in \Gamma$.
\item For all vertices $v$, the value $\delta(v)$ is a multiple of ${\displaystyle \frac{|\Gamma|}{O_v}}$, where $O_v$ is the number of points in the orbit $\Gamma v$.
\end{itemize}
\end{lemma}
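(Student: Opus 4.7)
The plan is to unpack the definition of the pullback and then use the orbit-stabilizer correspondence to convert the conditions into the statement that a well-defined quotient divisor exists. Recall that for a harmonic action of $\Gamma$ on $G$, the horizontal multiplicity $m_\phi(v)$ at a vertex $v$ (with $\phi$ the quotient map $G \to \hat{G}$) equals the size of the stabilizer $|\mathrm{Stab}_\Gamma(v)|$, which by the orbit-stabilizer theorem equals $|\Gamma|/O_v$. Thus by definition of the pullback, $\phi^*(\hat{\delta})(v) = (|\Gamma|/O_v)\,\hat{\delta}(\phi(v))$.

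For the forward direction, I would simply observe that if $\delta = \phi^*(\hat{\delta})$ then the formula above makes it immediate that $\delta(v)$ depends only on $\phi(v)$, hence is constant on $\Gamma$-orbits, and that $\delta(v)$ is visibly a multiple of $|\Gamma|/O_v$. These are the two conditions in the statement.

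For the converse, suppose $\delta$ satisfies both conditions. I would define a candidate divisor $\hat{\delta}$ on $\hat{G}$ by setting $\hat{\delta}(\hat{v}) = \delta(v)/(|\Gamma|/O_v)$ for any representative $v$ of the orbit $\hat{v}$; condition (1) ensures this is independent of the choice of representative and condition (2) ensures the quotient is an integer. By construction $\phi^*(\hat{\delta}) = \delta$, so the only remaining thing to verify is that $\hat{\delta}$ has degree zero. This is the one place where a short computation is needed: partitioning the sum $\deg(\delta)= \sum_{v \in G} \delta(v)$ over orbits and using that each orbit of $\hat{v}$ contributes $O_v \cdot \delta(v) = O_v \cdot (|\Gamma|/O_v)\hat{\delta}(\hat{v}) = |\Gamma|\,\hat{\delta}(\hat{v})$, we get $\deg(\delta) = |\Gamma| \deg(\hat{\delta})$, so $\deg(\hat{\delta}) = 0$.

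The only potential subtlety, and the part I would be most careful about, is verifying that $m_\phi(v) = |\Gamma|/O_v$ for a harmonic group action; I would cite Corry's setup from \cite{Corry1} for this fact rather than reprove it. Once that identification is in hand, everything else is a direct consequence of the definitions and the orbit-stabilizer theorem, so the lemma follows without further technical input.
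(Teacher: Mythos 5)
Your proof is correct and follows essentially the same route as the paper: both identify $m_\phi(v)=|\Gamma|/O_v$ via the orbit-stabilizer theorem, take necessity as immediate from the pullback formula, and for sufficiency define $\hat{\delta}(\hat{v})=\frac{O_v}{|\Gamma|}\delta(v)$ and check that its pullback recovers $\delta$. Your explicit verification that $\deg(\hat{\delta})=0$ is a small point the paper leaves implicit, but it does not change the argument.
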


\begin{proof}
The necessity of these conditions follows immediately from the definition of the pullback of a divisor. To prove their sufficiency, assume that $\delta$ is a divisor satisfying them.  Define $\hat{\delta}$ to be a divisor on $\hat{G}$ so that $\hat{\delta}(\hat{v}) = \frac{O_v}{|\Gamma|} \delta(v)$ where $v$ is any vertex of $G$ lying above $\hat{v}$.

Recall that the horizonal multiplicity of the map $\phi:G \rightarrow \hat{G}$ at a vertex $v \in V(G)$ is defined to be the size of the inertia group of the action at the vertex $v$:
\[m_\phi(v) = |I(v)| = |\{g \in \Gamma | g \cdot v = v\}|\]

In particular, it follows from the orbit-stabilizer theorem that $m_\phi(v)=|\Gamma|/O_v$ for each $v$. By definition of the pullback map, for each $v \in V(G)$ lying above $\hat{v}$ we have: \begin{eqnarray*}
\phi^*(\hat{D})(v) &=& m_\phi(v)\hat{D}(\hat{v})\\
&=&\frac{|\Gamma|}{O_v}\hat{D}(\hat{v})\\
&=&\frac{|\Gamma|}{O_v} \frac{O_v}{|\Gamma|} D(v) \\
&=& D(v)\\
\end{eqnarray*}
\noindent proving the lemma.
\end{proof}

\begin{remark}
In particular, this lemma shows that if $\hat{G}$ is a tree then any divisor $\delta$ which satisfies the hypotheses of the lemma is equivalent to the zero divisor in $\Jac(G)$. We can make this constructive in the following way:  Choose some leaf $v_0$ of the tree $G/\Gamma$. There will be $|\Gamma|$ edges in $G$ which lie above the edge coming out of this leaf and each of the vertices above this leaf will have $|\Gamma|/m$ edges coming from it.  Letting $\delta(x) = k(|\Gamma|/m)$ for each vertex $x$ lying above $v$ we see that if we fire all of the vertices in this orbit $k$ times then we will reduce the number of chips on each of these vertices to zero.  Note that if $\delta(x)$ were not a multiple of $(|\Gamma|/m)$ this process could not reduce the number of chips to zero.

We proceed by induction, next choosing a leaf $v_1$ of the tree obtained by deleting $v_0$ from the tree $G/\Gamma$ and firing all of the vertices that lie above it the same number of times to reduce the number of chips that lie above it to zero.  We note that if $v_1$ is adjacent to $v_0$ then every time we fire the vertices above $v_1$ we will also fire the vertices above $v_0$ in order to maintain the number of chips above that vertex to $0$.  In this way, we can eventually reduce the number of chips on all vertices to zero.
\end{remark}

Returning to our situation, we recall that the sets $\CP_1$ and $\CP_2$ are the sets of divisors on $\GG_n$ which are pullbacks of divisors of degree zero on the graphs $H_1$ and $H_2$ which are obtained by taking the quotient of $\GG_n$ by an involution.  In particular, one can check that a divisor $D$ is in the intersection $\CP_1 \cap \CP_2$ if and only if it is constant on a given $D_n$ orbit and has an even value at vertices in orbits with $n$ points.  These conditions are identical to those in Lemma \ref{L:orbits}, and therefore $D$ is a pullback of a divisor on $\hat{G}$.  In particular, we have shown the following:

\begin{theorem}\label{T:P1P2}
Let $\psi:\Jac(\hat{G}) \rightarrow \Jac(H_1) \oplus \Jac(H_2)$ be defined by $\psi(g) = (g,-g)$ and let $\phi:\Jac(H_1) \oplus \Jac(H_2) \rightarrow \Jac(H_1) + \Jac(H_2)$ be defined by $\phi(h_1,h_2)=h_1+h_2$.  Then the following sequence is exact.

\[1 \rightarrow \Jac(\hat{G}) \overset{\psi}{\rightarrow} \Jac(H_1) \oplus \Jac(H_2) \overset{\phi}{\rightarrow} \Jac(H_1) + \Jac(H_2) \rightarrow 1 \]

\end{theorem}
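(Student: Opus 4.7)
The plan is to check exactness at each of the three positions in the sequence, noting that the outer two are essentially immediate. Surjectivity of $\phi$ holds tautologically because $\Jac(H_1) + \Jac(H_2)$ is defined as the sumset inside $\Jac(\GG_n)$. Injectivity of $\psi$ follows from the Baker-Norine theorem: the composition $\Jac(\hat{G}) \xrightarrow{\psi} \Jac(H_1) \oplus \Jac(H_2) \xrightarrow{\pi_1} \Jac(H_1) \hookrightarrow \Jac(\GG_n)$ is precisely the pullback along the harmonic quotient $\GG_n \to \hat{G}$, which is injective. The inclusion $\operatorname{im}\psi \subseteq \ker\phi$ is formal, since by functoriality of pullback the two coordinates of $\psi(g)$ represent the same element of $\Jac(\GG_n)$ and their signed sum is zero.

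The substance of the theorem is the reverse inclusion $\ker\phi \subseteq \operatorname{im}\psi$. Given $(h_1,h_2)$ with $h_1 + h_2 = 0$ in $\Jac(\GG_n)$, I would pick divisor representatives $D_i \in \CP_i$ of the $h_i$; then $D_1 + D_2 \in \CL$. Applying an arbitrary $\sigma \in D_n$ to this relation, and using the $D_n$-invariance of $\CL$ together with $\sigma_1^* D_1 = D_1$ and $\sigma_2^* D_2 = D_2$, one deduces $\sigma^* D_1 \equiv D_1 \pmod{\CL}$, so the class $[D_1] \in \Jac(\GG_n)$ is $D_n$-invariant. Combined with the characterization established just before the theorem --- that $\CP_1 \cap \CP_2$ coincides with the set of pullbacks from $\hat{G}$ via Lemma \ref{L:orbits} --- the plan is then to produce an actual representative $F$ of $[D_1]$ inside $\CP_1 \cap \CP_2$; any such $F$ is by construction the pullback of a degree-zero divisor $\tilde g$ on $\hat{G}$, and the class $g = [\tilde g] \in \Jac(\hat{G})$ will satisfy $\psi(g) = (h_1,h_2)$.

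The main obstacle is this last step: promoting the $D_n$-invariance of the class $[D_1]$ to a bona fide $D_n$-invariant representative that additionally satisfies the divisibility condition of Lemma \ref{L:orbits} at size-$n$ orbit vertices. I plan to carry it out by a direct chip-firing manipulation, firing entire $D_n$-orbits of vertices simultaneously to absorb the error terms $\sigma^* D_1 - D_1 \in \CL$ into $D_1$ while preserving the $\sigma_1$-invariance already present. The orbit-size hypothesis of Section \ref{S:notation} (every $D_n$-orbit has size $n$ or $2n$) is precisely what keeps these symmetric orbit-firings integral, and the required parity at the fixed vertices of the size-$n$ orbits is inherited automatically from the original memberships $D_1 \in \CP_1$ and $D_2 \in \CP_2$. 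Once such an invariant representative is obtained, the identification with a pullback from $\hat{G}$, and hence the construction of the desired $g$, is immediate.
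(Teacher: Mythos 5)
Your handling of the outer terms and of $\operatorname{im}\psi\subseteq\ker\phi$ is correct, and your key ingredient is the same one the paper relies on: the identification of $\CP_1\cap\CP_2$ with the set of pullbacks of degree-zero divisors from $\hat{G}$ via Lemma \ref{L:orbits}. The problem is the one step that carries all the weight, namely $\ker\phi\subseteq\operatorname{im}\psi$. From $D_1+D_2\in\CL$ you correctly deduce that $\sigma^*D_1\equiv D_1\pmod{\CL}$ for every $\sigma\in D_n$, but you then need a representative of the class $[D_1]$ lying in $\CP_1\cap\CP_2$ --- i.e.\ one that is literally constant on $D_n$-orbits and even on the size-$n$ orbits --- and at exactly this point your argument stops being an argument and becomes a plan. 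Passing from a $D_n$-invariant class to a $D_n$-invariant representative is not formal: setting $c(\sigma)=\sigma^*D_1-D_1\in\CL$, an invariant representative $D_1-\ell$ with $\ell\in\CL$ exists precisely when $c(\sigma)=\sigma^*\ell-\ell$ is solvable, a cocycle condition that nothing in your sketch controls; in particular there is no reason the elements $\sigma^*D_1-D_1$ of $\CL$ should be expressible as the orbit-firings you propose to ``absorb'' them with. The further assertion that the parity condition at size-$n$ orbits is ``inherited automatically'' from $D_1\in\CP_1$ and $D_2\in\CP_2$ is also unjustified: membership in $\CP_1$ constrains the parity of $D_1$ at most at the single $\sigma_1$-fixed vertex $z^j_{(n+1)/2}$ of such an orbit, whereas after your symmetrization you need evenness of the common value on the entire orbit.

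For comparison, the paper takes a shorter route that avoids your detour: since the pullbacks $\Jac(H_i)\hookrightarrow\Jac(\GG_n)$ are injective, $\ker\phi$ is identified with the subgroup $\Jac(H_1)\cap\Jac(H_2)$ of $\Jac(\GG_n)$, and the theorem is read off from the divisor-level equality of $\CP_1\cap\CP_2$ with the pullback of $\mathrm{Div}^0(\hat{G})$. What must still be supplied on either route is that a class possessing a representative in $\CP_1$ and a representative in $\CP_2$ possesses a single representative in $\CP_1\cap\CP_2$; if you complete your write-up, prove that statement directly rather than the more roundabout invariant-representative claim, since it is exactly the form in which Lemma \ref{L:orbits} can be applied.
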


It remains to consider the intersection of the set $\CP_1 + \CP_2$ with the set $\CP_3$.  By doing so, we are able to prove the following theorem.

\begin{theorem}\label{T:kernel}
Let $\psi$ be the natural surjection from $\Jac(H_1) \oplus \Jac(H_2) \oplus \Jac(H_3)$ to $\Jac(H_1) + \Jac(H_2) + \Jac(H_3)$.
\begin{itemize}
\item If $n$ is odd or if $\GG_n$ has no $D_n$-orbits of size $n$ then the kernel of $\psi$ is isomorphic to $\Jac(\hat{G})^2$.
\item If $n$ is even and $s \ge 1$ is the number of $D_n$-orbits of $\GG_n$ then the kernel of $\psi$ is isomorphic to $\Jac(\hat{G})^2 \oplus (\ZZ/2\ZZ)^{s-1}$.
\end{itemize}
\end{theorem}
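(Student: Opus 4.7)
The plan is to analyze $K$ by projecting onto the third coordinate. The homomorphism $\pi: K \to \Jac(H_3)$ sending $(h_1,h_2,h_3) \mapsto h_3$ has image $I := \Jac(H_3) \cap (\Jac(H_1)+\Jac(H_2))$, since $h_3$ lies in the image exactly when $-h_3 = h_1+h_2$ for some $h_i \in \Jac(H_i)$. Its kernel consists of triples $(h_1,-h_1,0)$ with $h_1 \in \Jac(H_1) \cap \Jac(H_2)$, which by Theorem \ref{T:P1P2} equals $\Jac(\hat G)$. This yields the short exact sequence
\[
0 \to \Jac(\hat G) \to K \to I \to 0,
\]
reducing the theorem to computing $I$ and then splitting this extension.

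For the computation of $I$, the containment $\Jac(\hat G) \subseteq I$ is immediate since pullbacks from $\hat G$ factor through every $H_i$. For the reverse, I would take a divisor $\delta_3 \in \CP_3$, with constant values $\delta_3(x_i^j) = \alpha_j$, $\delta_3(y_i^j) = \beta_j$, $\delta_3(z_i^j) = \gamma_j$, whose class lies in $\Jac(H_1) + \Jac(H_2)$, so that $\delta_3 \in \CP_1 + \CP_2 + \CL$. Applying Theorem \ref{T:P12} to a suitable Laplacian shift of $\delta_3$, the global condition is automatic because $X_i, Y_i, Z_i$ are independent of $i$; analysis of how Laplacian modifications act on the orbit-sums $\sum_i \delta(x_i^j) - \sum_i \delta(y_i^j)$ forces $\alpha_j - \beta_j$ to vanish, and when $n$ is odd the parity condition on $z$-orbits forces each $\gamma_j$ to be even. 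Combining these with Lemma \ref{L:orbits} (which characterizes pullbacks from $\hat G$ as $D_n$-invariant divisors taking even values on every $n$-orbit), we conclude $I = \Jac(\hat G)$ in the odd case and when $s = 0$. When $n$ is even and $s \geq 1$, however, the parity condition on $z$-orbits is automatic for divisors in $\CP_3$ (since $n\gamma_j$ is always even), so the parities of the individual $\gamma_j$'s are unconstrained apart from the single degree-zero relation; this contributes an additional $(\ZZ/2\ZZ)^{s-1}$, yielding $I \cong \Jac(\hat G) \oplus (\ZZ/2\ZZ)^{s-1}$.

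To split the sequence, I would show every element of $I$ lies inside $\Jac(H_2)$: the $\Jac(\hat G)$ summand does because $\Jac(\hat G) \subseteq \Jac(H_2)$, and in the even case the extra $(\ZZ/2\ZZ)^{s-1}$ classes can be represented by $\sigma_2$-invariant divisors supported on individual $z$-orbits. The map $h_3 \mapsto (0, -h_3, h_3)$ is then a well-defined section of $\pi$, splitting the sequence and producing the claimed decomposition of $K$. The main obstacle will be this middle step, especially the even case: verifying that precisely $(\ZZ/2\ZZ)^{s-1}$ parity freedom survives Laplacian modifications (rather than $(\ZZ/2\ZZ)^s$), and locating explicit $\sigma_2$-invariant representatives so that the section construction applies uniformly across all of $I$.
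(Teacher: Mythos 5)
Your skeleton is the same as the paper's: the short exact sequence $0 \to \Jac(\hat{G}) \to K \to I \to 0$ obtained by projecting onto the third coordinate, with the kernel identified via Theorem \ref{T:P1P2} and $I = \Jac(H_3) \cap (\Jac(H_1)+\Jac(H_2))$, is exactly the structure the paper uses (implicitly) when it says the result follows by ``combining'' the intersection computation with Theorem \ref{T:P1P2}. The genuine gap is that your computation of $I$ is deferred precisely at the point where all the content lies. The paper works entirely at the level of divisors: it characterizes $(\CP_1+\CP_2)\cap\CP_3$ via Theorem \ref{T:P12} as the divisors constant on $D_n$-orbits with $n\cdot\gamma(z^j)$ even, then applies Lemma \ref{L:orbits} (odd case) or the explicit divisors $\delta_j$ supported on pairs of $z$-orbits (even case). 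You instead want to apply Theorem \ref{T:P12} to ``a suitable Laplacian shift of $\delta_3$,'' but the conditions of Theorem \ref{T:P12} are \emph{not} preserved by adding elements of $\CL$ --- indeed the whole point of Theorem \ref{T:quotient} and Lemma \ref{L:LL'} is that $\CL$ maps onto a nontrivial subgroup of $\CD/\CP$, so firing vertices changes the orbit-sums $\sum_i\delta(x_i^j)-\sum_i\delta(y_i^j)$ and the parities $\sum_i\delta(z_i^j)$. Consequently ``the global condition is automatic because $X_i,Y_i,Z_i$ are independent of $i$'' holds for $\delta_3$ itself but not for its shift, and the ``analysis of how Laplacian modifications act on the orbit-sums'' that you defer is the entire proof of the step; nothing in the proposal indicates how it would be carried out. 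You should either follow the paper and compute the divisor-level intersection $(\CP_1+\CP_2)\cap\CP_3$ directly (which requires no Laplacian shifts at all), or genuinely carry out the $\CL$-analysis, which amounts to redoing the computations behind Lemma \ref{L:LL'}.

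There is also a concrete error in your splitting step. The section $h_3 \mapsto (0,-h_3,h_3)$ requires $I \subseteq \Jac(H_2)$, and your claim that the $(\ZZ/2\ZZ)^{s-1}$ classes are represented by ``$\sigma_2$-invariant divisors supported on individual $z$-orbits'' does not suffice: membership in $\CP_2$ requires not just $\sigma_2$-symmetry but also that $\delta(z_1^j)$ and $\delta(z_{(n+2)/2}^j)$ be even, and the order-two classes are represented by divisors taking the odd value $1$ on an entire $z$-orbit, so they are not in $\CP_2$. The repair is to route the section through the first coordinate instead: when $n$ is even, $\sigma_1$ fixes no vertex in any orbit of size $n$ (the congruence $2i \equiv 1 \pmod n$ has no solution), so $\CP_1$ imposes no parity condition there and the divisors $\delta_j$ do lie in $\CP_1$; alternatively, dispense with sections entirely and exhibit the subgroup of $K$ generated by $(g,-g,0)$, $(0,h,-h)$ for $g,h \in \Jac(\hat{G})$ together with $(\delta_j,0,-\delta_j)$, then compare orders with the extension.
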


\begin{proof}
Recall that a divisor $\gamma \in \CP_1 + \CP_2$ if and only if it satisfies the requirements of Theorem \ref{T:P12}.  Moreover, $\gamma \in \CP_3$ if and only if for each fixed $j$, the value of $\gamma(x^j_i)$ is constant as $i$ changes and the same is true for the sets of $y^j_i$ and $z^j_i$. One can check that these conditions together imply that $\gamma \in (\CP_1+\CP_2) \cap \CP_3$ if and only if the value of the divisor $\gamma$ is constant on all $D_n$-orbits and that for each orbit of size $n$ we have that $n \cdot \gamma(z^j)$ is even.

If $n$ is odd, this implies that the value of $\gamma(z^j)$ is even for each $j$ and therefore $\gamma$ satisfies the conditions of Lemma \ref{L:orbits}, so it is the pullback of a divisor on $\hat{G}$ in $K(\GG_n)$.  Combining this result with Theorem \ref{T:P1P2} proves the first half of the theorem.

On the other hand, if $n$ is even then the values of the divisor at the vertices $z^j_i$ can be odd and therefore may not be the pullbacks of a divisor in $\hat{G}$.  We note first that if $s=0$ then this restriction is vacuous, and if $s=1$ then any divisor in $(\CP_1 + \CP_2) \cap \CP_3$ must have an even value at the vertices $z_i^1$ as the total degree of the divisor is equal to zero.

If $s \ge 2$ then for each $1 \le j \le s-1$ define $\gamma_j$ to be the divisor on $\GG_n$ whose values are given by $\delta_j(z_i^j) = 1$ and $\delta_j(z_i^s)=-1$ for each $i$ and $\delta_j(v)=0$ for all other vertices $v$.  We make the following claims about the divisors $\delta_j$, all of which follow easily from Lemma \ref{L:orbits}:

\begin{itemize}
\item For each $j$, $\delta_j \in (\CP_1 + \CP_2) \cap \CP_3$ but $\delta_j$ is not the pullback of a divisor on $\hat{G}$.
\item For each $j$, $2\delta_j$ is a pullback of a divisor on $\hat{G}$.
\item If ${\displaystyle \sum_{j=1}^{s-1} a_j \delta_j}$ is the pullback of a divisor on $\hat{G}$ then all of the $a_j$ must be even.
\item Any divisor $\gamma \in (\CP_1 + \CP_2) \cap \CP_3$ can be written uniquely as ${\displaystyle \gamma = \hat{\gamma} + \sum_{j=1}^{s-1} a_j\delta_j}$ where $a_j \in \{0,1\}$ and $\hat{\gamma}$ is the pullback of a divisor on $\hat{G}$.
\end{itemize}

The second half of the theorem follows by combining these observations

\end{proof}

We note that $\Jac(H_1) + \Jac(H_2) + \Jac(H_3)$ is the subgroup of $\Jac(\GG_n)$ consisting of all divisors that can be written as the sum of divisors which are pullbacks of divisors on the three quotients.  In particular, we see that if $n$ is odd and $\hat{G}$ is a tree, then $\Jac(H_1) \oplus \Jac(H_2) \oplus \Jac(H_3) \subseteq \Jac(\GG_n)$, proving the first part of Theorem \ref{T:tree}.  In the next section, we will determine the quotient of this map.

\section{Cokernel of Map}

In this section, we wish to describe the divisors of degree zero on $\GG_n$ which cannot be obtained by the methods in the previous section.  In particular, we want to compute the cokernel of the map
\[\Jac(H_1)+\Jac(H_2)+\Jac(H_3) \rightarrow \Jac(\GG_n) \]

We begin by noting that, as a set, $\Jac(H_1)+\Jac(H_2)+\Jac(H_3)$ can be thought of as the set of divisors $\CP$.  We wish to consider this set up to an equivalence relation induced by the three separate pullbacks.  To be explicit, for $i=1,2$, $\Jac(H_i)$ consists of the set of divisors which are pullbacks of divisors of degree zero on $H_i$ up to the equivalence that two such divisors are the same if one can be obtained from the other by firing vertices that are fixed under the automorphism $\sigma_i$ or by simultaneously firing pairs of vertices that are interchanged by $\sigma_i$.  On the other hand, $\Jac(H_3)$ consists of divisors in $\CP_3$ up to the equivalence that we are allowed to simultaneously fire all vertices $x^j_i$ (or $y_i^j$ or $z_i^j$) for a fixed $j$. In particular, we wish to define the following set:

\begin{definition}
Let $\CL'$ be the subspace of $\CL$ generated by the divisors representing:
\begin{itemize}
\item Firing any single vertex $a_i$.
\item Firing any single vertex $z_i^j$.
\item Firing any pair of vertices $x_{i_1}^j$ and $y_{i_2}^j$ simultaneously.
\item Firing all vertices $x_i^j$ for a fixed $j$ simultaneously.
\item Firing all vertices $y_i^j$ for a fixed $j$ simultaneously.
\end{itemize}
\end{definition}

The following theorem is immediate from the definition:

\begin{theorem}\label{T:Hstructure}
$\Jac(H_1)+\Jac(H_2)+\Jac(H_3) = \CP/\CL'$
\end{theorem}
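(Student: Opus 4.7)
The plan is to unpack the three pullback maps and compare the resulting relation lattices. By the Baker--Norine pullback already invoked, each map $\phi_i^* : \Jac(H_i) \hookrightarrow \Jac(\GG_n)$ is injective, so as a subgroup of $\Jac(\GG_n)$ we may identify $\Jac(H_i)$ with $\CP_i / R_i$, where $R_i \subseteq \CL$ is the sublattice of Laplacian vectors obtained by pulling back single-vertex firings of $H_i$ to $\GG_n$. Concretely, firing a vertex $\hat v$ of $H_i$ pulls back to the simultaneous firing of its entire fiber in $\GG_n$, so $R_i$ is generated by these ``fiber firings.''

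My first step would be to write down these fibers explicitly for each $i$. For $H_1$ the fibers are either a $\sigma_1$-fixed singleton (e.g.\ $z_{(n+1)/2}^j$ when $n$ is odd) or a $\sigma_1$-orbit of size two of the form $\{z^j_i, z^j_{n+1-i}\}$ or $\{x^j_i, y^j_{n+1-i}\}$; the case of $H_2$ is analogous with $n+2-i$ replacing $n+1-i$ and with $\sigma_2$-fixed singletons such as $z^j_1$. For $H_3$ the fibers are the $\langle \sigma_2\sigma_1 \rangle$-orbits of size $n$, namely $\{z^j_i\}_i$, $\{x^j_i\}_i$, and $\{y^j_i\}_i$ for each fixed $j$.

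Next I would verify $R_1 + R_2 + R_3 = \CL'$ as sublattices of $\CL$. The fourth and fifth generators in the definition of $\CL'$ (simultaneous firings of all $x^j_i$ or all $y^j_i$ for fixed $j$) are literally $H_3$-pullbacks, while the third $H_3$-pullback (firing all $z^j_i$ together) will be recovered from the singleton firings below. The remaining generators come from $R_1+R_2$: the single-vertex firings in $\CL'$ arise directly as pullbacks at $\sigma_i$-fixed vertices, after which transitivity of $\langle\sigma_1,\sigma_2\rangle=D_n$ on each orbit lets one telescope two-element orbit firings from an ``anchor'' fixed vertex to an arbitrary $z^j_i$, and to combine the pairs $\{x^j_i, y^j_{n+1-i}\}$ and $\{x^j_i, y^j_{n+2-i}\}$ with the simultaneous $y$-firings to produce any desired pair $\{x^j_{i_1}, y^j_{i_2}\}$.

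Once $R_1+R_2+R_3 = \CL'$ is established, the theorem is formal: the sum of three subgroups of $\Jac(\GG_n)$ of the form $\CP_i/R_i$ equals $(\CP_1+\CP_2+\CP_3)/(R_1+R_2+R_3) = \CP/\CL'$. The main (essentially the only) obstacle is the bookkeeping in the preceding paragraph, ensuring that the telescoping procedure actually reproduces every generator of $\CL'$ without leaving anything out and, conversely, that no combination of fiber firings escapes $\CL'$ to enlarge the relation lattice.
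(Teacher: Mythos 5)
Your identification of each $\Jac(H_i)$ with $\CP_i/R_i$ and your telescoping computation showing $R_1+R_2+R_3=\CL'$ are correct, and they supply the substance that the paper leaves implicit: the paper's own ``proof'' is literally the sentence ``immediate from the definition,'' resting on the preceding paragraph that describes the pullback equivalences and on the fact that $\CL'$ is defined to be generated by exactly those firings. The differencing argument $(e_{z_i}+e_{z_{n+2-i}})-(e_{z_{n+2-i}}+e_{z_{i-1}})=e_{z_i}-e_{z_{i-1}}$ together with the $\sigma_2$-fixed anchor $z_1^j$ does recover every single $z$-firing, and the analogous manipulation on the $\{x_i^j,y_i^j\}$ orbits recovers every pair firing $\{x_{i_1}^j,y_{i_2}^j\}$, so the lattice identity holds.

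The gap is in your final ``formal'' step. If you read $\Jac(H_1)+\Jac(H_2)+\Jac(H_3)$ as the internal sum of the three subgroups $\phi_i^*(\Jac(H_i))\subseteq\Jac(\GG_n)=\CD/\CL$, then that sum is $(\CP+\CL)/\CL\cong\CP/(\CP\cap\CL)$, not $\CP/(R_1+R_2+R_3)$: the general identity $\sum_i(A_i/C_i)=(\sum_i A_i)/(\sum_i C_i)$ fails because $(\CP_1+\CP_2+\CP_3)\cap\CL$ can strictly contain $R_1+R_2+R_3$, even though $\CP_i\cap\CL=R_i$ for each $i$ individually. This is not a hypothetical worry --- the paper points out immediately after the theorem that $\CL'\subseteq\CL\cap\CP$ can be a proper containment, precisely when firing a single $x_i^j$ or $y_i^j$ produces a divisor lying in $\CP$. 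So the statement is only correct under the reading the paper sets up in the paragraph preceding the definition of $\CL'$: the left-hand side denotes $\CP$ modulo the equivalence relation generated by the three pullback relation lattices (an ``external'' sum), in which case the content of the theorem is exactly your computation $R_1+R_2+R_3=\CL'$, and the subsequent discrepancy $(\CP\cap\CL)/\CL'$ is what Theorem \ref{T:quotient} has to absorb via the isomorphism theorems. You should either adopt that reading explicitly or note that as subgroups of $\Jac(\GG_n)$ the asserted equality can fail.
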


We note that $\CL' \subseteq \CL \cap \CP$ and in many cases these two sets are actually equal.  However, there are examples in which $\CL'$ is a proper subset, which occurs when the divisor corresponding to firing a single $x_i^j$ or $y_i^j$ vertex from a given orbit is in $\CP$.

\begin{lemma} \label{L:LL'}
$\CL/\CL' \cong (\ZZ/n\ZZ)^t$
\end{lemma}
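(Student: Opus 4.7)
The plan is to construct surjections $(\ZZ/n\ZZ)^t \twoheadrightarrow \CL/\CL' \twoheadrightarrow (\ZZ/n\ZZ)^t$ whose composition is the identity on $(\ZZ/n\ZZ)^t$; both maps must then be isomorphisms. For the first surjection, the generators of $\CL'$ listed in the definition give relations in $\CL/\CL'$: each single $z$-firing $L\chi_{z_i^j}$ is zero, and the pair-firing generators $L\chi_{x_k^j}+L\chi_{y_l^j}\in\CL'$ (valid for \emph{all} $k,l$) force $[L\chi_{x_k^j}]\equiv -[L\chi_{y_l^j}]$ modulo $\CL'$. In particular $[L\chi_{x_k^j}]$ is independent of $k$; call this common class $\alpha_j$, so that $[L\chi_{y_l^j}]=-\alpha_j$ as well. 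The orbit-sum generator $\sum_i L\chi_{x_i^j}\in\CL'$ then forces $n\alpha_j=0$. Hence $\CL/\CL'$ is generated by $\alpha_1,\dots,\alpha_t$ subject to $n\alpha_j=0$, yielding a surjection $\pi:(\ZZ/n\ZZ)^t\twoheadrightarrow \CL/\CL'$ sending $e_j\mapsto\alpha_j$.

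For the reverse direction I would define $\varphi_j:\CL\to\ZZ/n\ZZ$ as follows. Given $D\in\CL$, pick any $c\in\ZZ^V$ with $Lc=D$ (possible by the very definition of $\CL$ as the image of $L$ on $\ZZ^V$) and set
\[\varphi_j(D) := \sum_{i=1}^n\bigl(c_{x_i^j}-c_{y_i^j}\bigr)\pmod n.\]
This is independent of the choice of $c$: any two preimages differ by an element of $\ker L=\ZZ\cdot\mathbf{1}$ (using connectivity of $\GG_n$), and adding a constant $\lambda$ to every coordinate changes both $\sum_i c_{x_i^j}$ and $\sum_i c_{y_i^j}$ by $n\lambda$, leaving the difference invariant. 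Vanishing of $\varphi_j$ on $\CL'$ is a direct check on each listed generator using the natural $c$-vector: a single $z$-firing and any firing supported outside orbit $j$ both give $0$; a pair firing $L\chi_{x_k^j}+L\chi_{y_l^j}$ gives $1-1=0$; the orbit-sum firing $\sum_i L\chi_{x_i^j}$ gives $n\equiv 0$, and similarly for $y$. Hence $\varphi=(\varphi_1,\dots,\varphi_t)$ descends to $\CL/\CL'\to(\ZZ/n\ZZ)^t$, and taking $c=\chi_{x_1^j}$ shows that $\varphi(\alpha_j)=e_j$, so $\varphi$ is surjective.

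The composition $\varphi\circ\pi$ sends $e_j\mapsto\alpha_j\mapsto e_j$ and is therefore the identity of $(\ZZ/n\ZZ)^t$, forcing both $\pi$ and $\varphi$ to be isomorphisms and proving the lemma. The main conceptual step is defining $\varphi_j$ not as a function of the values of the divisor $D$ itself but of any integer preimage $c$ with $Lc=D$; well-definedness then reduces to the fact that $\ker L$ is the rank-one lattice spanned by $\mathbf{1}$, after which the vanishing checks on the generators of $\CL'$ are routine.
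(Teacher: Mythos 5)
Your proof is correct, and its first half (the surjection $(\ZZ/n\ZZ)^t \twoheadrightarrow \CL/\CL'$ via the relations imposed by the generators of $\CL'$) is exactly the paper's argument: the $z$-firings die, the pair-firings identify all the $x$-classes in an orbit with the negatives of the $y$-classes, and the orbit-sum firings give $n\alpha_j=0$. Where you genuinely go beyond the paper is the reverse direction. The paper simply asserts that firing a single $x_i^j$ is not in $\CL'$ and that the resulting class has order exactly $n$, which by itself does not rule out extra relations (an order strictly dividing $n$, or cross-relations between different orbits $j$). Your homomorphism $\varphi_j(D)=\sum_i(c_{x_i^j}-c_{y_i^j}) \bmod n$, defined on an integer preimage $c$ of $D$ under the Laplacian and well defined because $\ker L=\ZZ\cdot\mathbf{1}$ on a connected graph, is precisely the invariant needed to certify that the presentation has no further collapsing; the computation $\varphi\circ\pi=\mathrm{id}$ then pins down the group exactly. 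So your route buys a complete, self-contained verification of the isomorphism, at the cost of a little extra machinery, whereas the paper's proof is a sketch that leaves the injectivity of $\pi$ to the reader.
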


\begin{proof}
$\CL$ is generated by basis elements corresponding to firing the vertices $x_i^j,y_i^j,z_i^j$ and $a_i$.  The basis elements corresponding to the $z_i^j$ and $a_i$ are also contained in $\CL'$. Firing the vertex $x_i^j$ a single time is not in $\CL'$, but firing $n$ times is, making it an element of order $n$ in $\CL/\CL'$.  Moreover, $x_i^j$ and $y_i^j$ are inverses in $\CL/\CL'$.
\end{proof}

We have already noted that $\Jac(H_1) + \Jac(H_2) + \Jac(H_3)$ is a subgroup of $\Jac(\GG_n)$, and we are interested in computing the quotient.  The following theorem allows us to do this.

\begin{theorem}\label{T:quotient}
$\Jac(\GG_n)/(\Jac(H_1) + \Jac(H_2) + \Jac(H_3)) \cong \ZZ/n\ZZ$ if $n$ is odd or $s=0$ and $(\ZZ/n\ZZ) \oplus (\ZZ/2\ZZ)^{s-1}$ if $n$ is even and $s \ge 1$.
\end{theorem}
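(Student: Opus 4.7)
The plan is to identify $\Jac(\GG_n)/(\Jac(H_1)+\Jac(H_2)+\Jac(H_3))$ with the concrete group $\CD/(\CP+\CL)$ and count its order. By Theorem~\ref{T:Hstructure}, the subgroup $\Jac(H_1)+\Jac(H_2)+\Jac(H_3)$ inside $\Jac(\GG_n)=\CD/\CL$ is $\CP/\CL'$, which also has the description $(\CP+\CL)/\CL$ as the image of $\CP$; in particular $\CL\cap\CP=\CL'$, and the desired quotient is $\CD/(\CP+\CL)$. Combining Corollary~\ref{C:DP} for $|\CD/\CP|$ with Lemma~\ref{L:LL'} (giving $|\CL/\CL'|=n^t$), the short exact sequence $0\to\CL/\CL'\to\CD/\CP\to\CD/(\CP+\CL)\to 0$ yields $|\CD/(\CP+\CL)|=n$ in the odd (or $s=0$) case, and $n\cdot 2^{s-1}$ in the even case with $s\ge 1$.

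To pin down the group structure, I would analyze the image of $\CL$ in $\CD/\CP$. Using the decomposition $\CD/\CP\cong(\ZZ/n\ZZ)^{t+1}\oplus(\ZZ/2\ZZ)^{s-1}$ (the last factor present only in the even case with $s\ge 1$), I would show that the image of $\CL$ lies in the $(\ZZ/n\ZZ)^{t+1}$ factor: each firing $L_v$ preserves every parity $\sum_i\delta(z_i^j)\bmod 2$ because the harmonic hypothesis forbids edges between two vertices of a single $n$-orbit, so no single firing can change any $\sum_i\delta(z_i^j)$ by an odd amount. Hence the cokernel splits as $\bigl((\ZZ/n\ZZ)^{t+1}/\mathrm{image}(\CL)\bigr)\oplus(\ZZ/2\ZZ)^{s-1}$. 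To identify the first factor as cyclic $\ZZ/n\ZZ$ rather than some other abelian group of order $n$, I would construct an explicit surjective homomorphism $\Pi:(\ZZ/n\ZZ)^{t+1}/\mathrm{image}(\CL)\twoheadrightarrow\ZZ/n\ZZ$ of the form $\Pi(\delta)=\sum_v g(v)\delta(v)\bmod n$, where
\[g(v)=\mathrm{index}(v)+\sum_{j=1}^t\lambda_j\bigl(\mathbf{1}_{O_j^x}(v)-\mathbf{1}_{O_j^y}(v)\bigr)\pmod n,\]
with $O_j^x=\{x_i^j\}_i$, $O_j^y=\{y_i^j\}_i$, and the $\lambda_j\in\ZZ/n\ZZ$ chosen so that the mod-$n$ Laplacian of $g$ vanishes at every vertex. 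By construction $\Pi$ vanishes on $\CL$ (by harmonicity) and on $\CP$ (since $g$ is a $\ZZ/n\ZZ$-linear combination of the index function and the orbit-difference indicators defining $c_0$ and the $c_j$ of Section~\ref{S:P123}), and combined with the order count it gives the claimed isomorphism.

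The main obstacle is constructing the $\lambda_j$'s. Rotation invariance of $\Delta g$ on each $\langle\sigma_1\sigma_2\rangle$-orbit, together with the identity $g\circ\sigma_1\equiv(n+1)-g\pmod n$ (which forces $\Delta g(\sigma_1 v)\equiv -\Delta g(v)\pmod n$), makes $\Delta g\equiv 0$ automatic on every $z$-orbit when $n$ is odd and reduces the condition on the $2n$-orbits to a $t\times t$ linear system over $\ZZ/n\ZZ$ whose coefficients are edge-counts between distinct $2n$-orbits. Proving that this system is solvable modulo $n$ is where the argument makes essential use of both the harmonic hypothesis and the assumption that every $D_n$-orbit has size $n$ or $2n$.
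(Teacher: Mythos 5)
Your opening paragraph is, in substance, the paper's own proof: the paper likewise passes to the subgroup lattice generated by $\CL'$, $\CP\cap\CL$, $\CP$, $\CL$ and $\CP+\CL$, identifies the desired quotient with $(\CD/\CP)/(\CL/\CL')$, and reads off the order from Corollary~\ref{C:DP} and Lemma~\ref{L:LL'}. One caution on that step: you assert $\CL\cap\CP=\CL'$ as though it were automatic, but the paper explicitly warns after Theorem~\ref{T:Hstructure} that $\CL'$ can be a \emph{proper} subgroup of $\CL\cap\CP$ (when firing a single $x_i^j$ or $y_i^j$ already lies in $\CP$); your short exact sequence $0\to\CL/\CL'\to\CD/\CP\to\CD/(\CP+\CL)\to 0$ is only left-exact modulo that equality, so the reduction has to be routed through the chain of isomorphism-theorem identities (with the factor $(\CP\cap\CL)/\CL'$ tracked explicitly) as the paper does.

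The genuine gap is in your second and third paragraphs. Having correctly observed that the order count alone does not determine the isomorphism type, you propose to exhibit a surjection $\Pi$ onto $\ZZ/n\ZZ$, but the existence of $\Pi$ rests on solving a $t\times t$ linear system over $\ZZ/n\ZZ$ for the $\lambda_j$, and you explicitly leave that solvability unproved; since that is the entire content of your structural claim, the argument does not close. The parity claim used to split off the $(\ZZ/2\ZZ)^{s-1}$ factor is also unjustified: harmonicity only forbids edges \emph{within} a single $n$-orbit, so a vertex outside the orbit $\{z_i^j\}_i$ may have an odd number of edges into it, and firing it then changes $\sum_i\delta(z_i^j)$ by an odd amount. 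None of this machinery is actually needed. Once you know that $\CD/\CP\cong(\ZZ/n\ZZ)^{t+1}$ (plus $(\ZZ/2\ZZ)^{s-1}$ in the even case), that the image of $\CL$ there is a quotient of $\CL/\CL'\cong(\ZZ/n\ZZ)^t$, and that the cokernel has order $n$ (resp.\ $n\cdot 2^{s-1}$), the isomorphism type of the cokernel is forced by elementary finite abelian group theory: for each prime power $p^k$ exactly dividing $n$, a non-cyclic $p$-part of the cokernel would force $p^m(\ZZ/p^k\ZZ)^{t+1}$ with $m<k$ to embed into a group generated by $t$ elements, which is impossible by comparing $p$-ranks, and a minimal-generator count prevents the $(\ZZ/2\ZZ)^{s-1}$ summand from being absorbed. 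That (tersely stated) group-theoretic observation is what the paper relies on after citing Corollary~\ref{C:DP} and Lemma~\ref{L:LL'}, and supplying it would repair your proof without constructing $\Pi$ at all.
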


\begin{proof}
We have the following inclusion of groups:

\[ \bfig \morphism(100,0)/{^(->}/<0,300>[\CL'`\CP \cap \CL;]
\morphism(100,300)/{^(->}/<-300,300>[\CP \cap \CL`\CP;]
\morphism(100,300)/{^(->}/<300,300>[\CP \cap \CL`\CL;]
\morphism(-200,600)/{^(->}/<300,300>[\CP`\CP+\CL;]
\morphism(400,600)/{^(->}/<-300,300>[\CL`\CP+\CL;]
\morphism(100,900)/{^(->}/<0,300>[\CP+\CL`\CD;]
\efig \]

By definition, $\Jac(\GG_n) \cong \CD/\CL$ and by Theorem \ref{T:Hstructure} we have $\Jac(H_1)+\Jac(H_2)+\Jac(H_3) = \CP/\CL'$.  Therefore, we are interested in computing $(\CD/\CL)/(\CP/\CL')$.  In order to do this, we note that the following relationships follow immediately from the isomorphism theorems:
 \[\CD/(\CP+\CL) \cong (\CD/\CL)/((\CP+\CL)/\CL)\]
 \[(\CP+\CL)/\CL \cong \CP/(\CP \cap \CL)\]
 \[\CP/(\CP \cap \CL) \cong (\CP/\CL')/(\CP \cap \CL/\CL')\]
\noindent from which one concludes that $(\CD/\CL)/(\CP/\CL') \cong (\CD/(\CP+\CL))/(\CP \cap \CL/\CL')$.  By symmetry, we see that $(\CD/\CL)/(\CP/\CL') \cong (\CD/\CP)/(\CL/\CL')$.  The theorem then follows from Corollary \ref{C:DP} and Lemma \ref{L:LL'}.
\end{proof}

\section{Examples}

In this section, we give several examples of our main results and how they can be used to compute the Jacobians of graphs admitting certain symmetry groups.

\subsection{A Circulant Graph}

For our first example, let $n=2k+1 \ge 3$ be an odd integer and consider the circulant graph $C_n^{1,2}$.  Letting $\sigma_1$ and $\sigma_2$ be involutions on $C_n^{1,2}$ as defined in Example \ref{Ex:circ}, we can see that $C_n^{1,2}/\sigma_1$ and $C_n^{1,2}/\sigma_2$ are both isomorphic to the graph $H_k$ as defined in Figure \ref{F:circmod}.

\begin{figure}
\begin{center}
\begin{tikzpicture}
[scale=.5,auto=left,every node/.style={circle,fill=blue!20}]

\node (v0) at (0,0) {$v_1$};
\node (v1) at (3,0) {$v_2$};
\node (v2) at (6,0) {$v_3$};
\node (v3) at (9,0) {$v_4$};
\node (vk1) at (12,0) {$v_k$};
\node (vk) at (15,0) {$v_\infty$};

\foreach \from/\to in {v0/v1,v1/v2,v2/v3,vk1/vk}
    \draw (\from) -- (\to);

\draw[dotted] (v3) -- (vk1);

\foreach \from/\to in {v0/v2,vk1/vk,v3/v1}
    \draw (\from) to[bend left=45] (\to);

\foreach \from/\to in {v2/vk1}
    \draw[dotted] (\from) to[bend left=45] (\to);
\end{tikzpicture}

\caption{The graph $H_k = C_n^{1,2}/\langle\sigma\rangle$}
\label{F:circmod}
\end{center}
\end{figure}
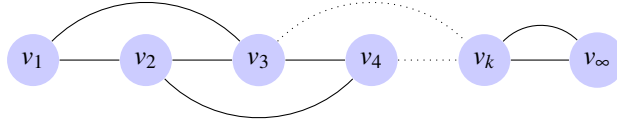

Moreover, $C_n^{1,2}/\langle\sigma_1\sigma_2\rangle$ and $C_n^{1,2}/D_n$ are both a single point and therefore have trivial Jacobians.  In particular, it follows from Theorems \ref{T:kernel} and \ref{T:quotient} that the following sequence is exact:
\[1 \rightarrow \Jac(H_k) \oplus \Jac(H_k) \rightarrow \Jac(C_n^{1,2}) \rightarrow \ZZ/n\ZZ \rightarrow 1\]

In order to compute the Jacobian of the graph $H_k$, we first note that it is cyclic.  In particular, for any divisor $\delta$, we note that after performing $\delta(v_1)$ `borrowing' operations at vertex $v_3$, the divisor $\delta$ will be equivalent to a divisor $\delta_1$ so that $\delta_1(v_1)=0$.  One can repeat this procedure for all $j < k$ to obtain equivalent divisors $\delta_j$ so that $\delta_j(v_i)=0$ for all $i \le j$.  In particular, $\delta_{k-1}$ will only have nonzero values at the vertices $v_k$ and $v_\infty$ and therefore must be a multiple of the divisor $\gamma$ which has $\gamma(v_k)=-\gamma(v_\infty)=1$ and $\gamma(v_i)=0$ for $i<k$.  This shows that $\gamma$ is a generator of $\Jac(H_k)$.

We determine the order of $\Jac(H_k)$ by counting the number of spanning trees of the graph $H_k$; we denote this number by $a_k$.  In particular, one can easily compute that $a_1=2$  and $a_2=5$.  Moreover, any spanning tree of $H_k$ will fall into one of the following three categories
\begin{itemize}
\item A spanning tree on the vertices $\{v_2,\ldots,v_k,v_\infty\}$ and the edge between $v_1$ and $v_2$.  There are $a_{k-1}$ such trees.
\item A spanning tree on the vertices $\{v_2,\ldots,v_k,v_\infty\}$ and the edge between $v_1$ and $v_3$. There are $a_{k-1}$ such trees.
\item A spanning forest on the vertices $\{v_2,\ldots,v_k,v_\infty\}$ such that $v_2$ and $v_3$ are in different components in addition to the two edges from $v_1$ to $v_2$ and $v_3$.  There are $a_{k-1}-a_{k-2}$ such trees.
\end{itemize}
In particular, we have that $a_k=3a_{k-1}-a_{k-2}$.  This shows that $a_k=F_{2k+1}=F_n$ is the $n^{th}$ Fibonacci number, using the convention that $F_1=F_2=1$.  It follows that the number of spanning trees on $C_n^{1,2}$ is equal to $nF_n^2$, agreeing with a well-known result originally proven in \cite{KG}.  Moreover, if $(n,F_n)=1$ then it follows from our result that $\Jac(C_n^{1,2}) \cong (\ZZ/F_n\ZZ) \oplus (\ZZ/F_n\ZZ) \oplus (\ZZ/n\ZZ)$.

\vskip .1in

\subsection{Klein-Four Example}

Let $\GG$ be the graph in Figure \ref{F:Klein}, which admits a harmonic $(\ZZ/2\ZZ)^2$-action defined by the involutions $\sigma_1$ which swaps the vertices $x_1$ and $x_2$ and $\sigma_2$ which swaps $a_i$ and $b_i$.

\begin{figure}[h]
\begin{center}
\begin{tikzpicture}
  [scale=.5,auto=left,every node/.style={circle,fill=blue!20}]
  \node (l) at (0,0) {$x_1$};
  \node (r) at (6,0)  {$x_2$};
  \node (u1) at (3,1.5) {$a_1$};
  \node (u2) at (3,4) {$a_2$};
  \node (d1) at (3,-1.5) {$b_1$};
  \node (d2) at (3,-4) {$b_2$};

  \foreach \from/\to in {l/u1,l/u2,l/d1,l/d2,r/u1,r/u2,r/d1,r/d2}
    \draw (\from) -- (\to);
\end{tikzpicture}

\caption{A graph admitting a $(\ZZ/2\ZZ)^2$-action}
\label{F:Klein}
\end{center}
\end{figure}

In this case, one can easily compute that $\GG/\sigma_1$ is a tree, $\GG/\sigma_2$ is isomorphic to a square, and $\GG/\sigma_1\sigma_2$ is isomorphic to two $2$-cycles sharing a single vertex.  In particular, setting $J = \Jac(\GG/\sigma_1)+\Jac(\GG/\sigma_2)+\Jac(\GG/\sigma_1\sigma_2)$, Theorem \ref{T:kernel} implies that we have an exact sequence
\[ 1 \rightarrow (\ZZ/2\ZZ)^2 \rightarrow (\ZZ/2\ZZ)^2 \oplus \ZZ/4\ZZ \rightarrow J \rightarrow 1 \]

\noindent and Theorem \ref{T:quotient} implies that we have an exact sequence
\[ 1 \rightarrow J \rightarrow \Jac(\GG) \rightarrow (\ZZ/2\ZZ)^2 \oplus (\ZZ/2\ZZ) \rightarrow 1 \]

One can in fact show in this case that the exact sequences split in such a way that gives $\Jac(\GG) \cong (\ZZ/2\ZZ)^2 \oplus (\ZZ/8\ZZ)$.

\subsection{Concentric Polygons}

For this example, we consider the graph $\GG_4$ defined in Figure \ref{F:octagon}. We let $\sigma_1$ be the involution defined by reflection in the line through the vertices $z_1$ and $z_3$ and we let $\sigma_2$ be defined by reflection across the vertical axis.

\begin{figure}[h]
\begin{center}

\begin{tikzpicture}
  [scale=.7,auto=left,every node/.style={circle,fill=blue!20}]
  \node (c1) at (1,1) {$z_1$};
  \node (c2) at (3,1)  {$z_2$};
  \node (c3) at (3,-1) {$z_3$};
  \node (c4) at (1,-1) {$z_4$};
  \node (o1) at (1,3) {$x_1$};
  \node (o2) at (3,3) {$y_2$};
  \node (o3) at (5,1){$x_2$};
  \node (o4) at (5,-1){$y_3$};
  \node (o5) at (3,-3){$x_3$};
  \node (o6) at (1,-3){$y_4$};
  \node (o7) at (-1,-1){$x_4$};
  \node (o8) at (-1,1){$y_1$};

  \foreach \from/\to in {c1/c2,c2/c3,c3/c4,c4/c1,o1/o2,o2/o3,o3/o4,o4/o5,o5/o6,o6/o7,o7/o8,o8/o1,c1/o1,c1/o8,c2/o2,c2/o3,c3/o4,c3/o5,c4/o6,c4/o7}
    \draw (\from) -- (\to);
\end{tikzpicture}
\caption{The graph $\GG_4$}
\label{F:octagon}
\end{center}
\end{figure}

One can easily check that $\GG_4/D_n$ is a tree, and therefore it follows from Theorems \ref{T:kernel} and \ref{T:quotient} that $\Jac(\GG_4/\sigma_1) \oplus \Jac(\GG_4/\sigma_2) \oplus \Jac(\GG_4/\sigma_1\sigma_2)$ is a subgroup of $\Jac(\GG_4)$ whose quotient is isomorphic to $\ZZ/4\ZZ$.  In figure \ref{F:quotients} we display the quotient graphs $\GG_4/\sigma_1$, $\GG_4/\sigma_2$, and $\GG_4/\sigma_1\sigma_2$.

\begin{figure}[h]
\begin{multicols}{3}

\begin{tikzpicture}
  [scale=.6,auto=left,every node/.style={circle,fill=blue!20}]
  \node (c1) at (1,1) {\,\,\,\,};
  \node (c2) at (3,1)  {$b$};
  \node (c3) at (3,-1) {$d$};
  \node (o1) at (1,3) {\,\,\,\,};
  \node (o2) at (3,3) {$a$};
  \node (o3) at (5,1){$c$};
  \node (o4) at (5,-1){$e$};

  \foreach \from/\to in {c1/c2,c2/c3,o1/o2,o2/o3,o3/o4,c1/o1,c2/o2,c2/o3,c3/o4}
    \draw (\from) -- (\to);
\end{tikzpicture}

\begin{tikzpicture}
  [scale=.6,auto=left,every node/.style={circle,fill=blue!20}]
  \node (c2) at (3,1)  {$a$};
  \node (c3) at (3,-1) {$b$};
  \node (o2) at (3,3) {\,\,\,\,};
  \node (o3) at (5,1){\,\,\,\,};
  \node (o4) at (5,-1){$c$};
  \node (o5) at (3,-3){$d$};

  \foreach \from/\to in {c2/c3,o2/o3,o3/o4,o4/o5,c2/o2,c2/o3,c3/o4,c3/o5}
    \draw (\from) -- (\to);
\end{tikzpicture}

\begin{tikzpicture}
  [scale=.6,auto=left,every node/.style={circle,fill=blue!20}]
  \node (x) at (1,1) {$x$};
  \node (y) at (5,1)  {$y$};
  \node (z) at (3,-4) {$z$};
  \foreach \from/\to in {x/z,y/z}
    \draw (\from) -- (\to);
   \draw (x) to[bend left=30] (y);
   \draw (y) to[bend left=30] (x);

\end{tikzpicture}

\end{multicols}
\caption{Quotients of the graph $\GG_4$}
\label{F:quotients}

\end{figure}

By successively doing the appropriate number of `borrowing' operations from the vertices $a,b,c,d,e$, any divisor $\delta$ on $\GG_4/\sigma_1$ is equivalent to a divisor that has zeroes on all vertices other than $d$ and $e$, showing that $\Jac(\GG_4/\sigma_1)$ is cyclic.  Moreover, one can directly count that the number of spanning trees on $\GG_4/\sigma_1$ is $40$ so $\Jac(\GG_4/\sigma_1) \cong \ZZ/40\ZZ$. A similar argument can be used to show that $\Jac(\GG_4/\sigma_2)$ is the cyclic group of order $30$.  Finally, one can compute directly that $\Jac(\GG_4/\sigma_1\sigma_2) \cong \ZZ/5\ZZ$.  Therefore, we have that $(\ZZ/40\ZZ) \oplus (\ZZ/30\ZZ) \oplus (\ZZ/5\ZZ) \subseteq \Jac(\GG_4)$ with quotient equal to $\ZZ/4\ZZ$.  We note that in this example one can compute that $\Jac(\GG_4) \cong (\ZZ/160\ZZ) \oplus (\ZZ/30\ZZ) \oplus (\ZZ/5\ZZ)$ showing that in general the quotient does not split as a direct sum.

\bibliographystyle{amsplain}
\bibliography{critgps}

\end{document}